\def\C{\mathbb{C}}
\def\R{\mathbb{R}}
\def\H{\mathbb{H}}
\def\Z{\mathbb{Z}}
\def\F{\mathcal{F}}
\def\F{\mathcal {F}}
\def\Isom{{\sf{Isom}}}
\def\Aff{\mathsf{Aff}}
\def\Sim{\mathsf{Sim}}
\def\G{\mathcal G}
\def\Aut{{\sf{Aut}}}
\def\GL{{\sf{GL}}}
\def\O{{\sf{O}}}
\def\SO{{\sf{SO}}}
\def\Mink{{\sf{Mink}} }
\newtheorem*{theorem*}{Theorem}
\newtheorem{theorem}{{Theorem}}[section]
\newtheorem{proposition}[theorem]{{Proposition}}%[section]
\newtheorem{isom.ext}[theorem]{{Trivial isometric extension}}%[section]
\newtheorem{definition}[theorem]{{Definition}}%[section]
\newtheorem{lemma}[theorem]{{Lemma}}%[section]
\newtheorem{corollary}[theorem]{{Corollary}}%[section]
\newtheorem{remark}[theorem]{{Remark}}%[section]
\newtheorem{example}[theorem]{{Example}}%[section]
\newtheorem{claim}[theorem]{{\sc Claim}}%[section]
\definecolor{purple}{rgb}{0.65,0.12,0.94}
\definecolor{forestgreen}{rgb}{0.4,0.64,0.13}
\title{On the Inoue--Bombieri construction}
\author [B. Flamencourt]{Brice Flamencourt}
\address{UMPA, CNRS, 
	\'Ecole Normale Sup\'erieure de Lyon, France}
\email{brice.flamencourt@ens-lyon.fr}
\author[A. Zeghib]{Abdelghani Zeghib }
\address{UMPA, CNRS, 
	\'Ecole Normale Sup\'erieure de Lyon, France}
\email{abdelghani.zeghib@ens-lyon.fr 
	\hfill\break\indent
	\url{http://www.umpa.ens-lyon.fr/~zeghib/}}
\subjclass[2020]{22E40, 51D25, 53C18, 53C35}
\keywords{Inoue surfaces, LCP manifolds, Lattices of Lie groups, Riemannian foliations}
\begin{document}
\maketitle

\begin{abstract}
We study compact quotients of a Riemannian product $\R^q \times (N, g_N)$, where $(N, g_N)$ is a complete Riemannian manifold, by discrete subgroups $\Gamma$ of $\Sim(\R^q) \times \Isom(N)$. When $N$ is a symmetric space of non-compact type, this construction generalizes the well-known Inoue--Bombieri surfaces. We show that this setting is actually equivalent to that of the so-called LCP manifolds, and we establish a Bieberbach-type rigidity result in the case where $N$ is symmetric. In addition, we provide a classification of the manifolds $N$ and the groups $\Gamma$ when $N$ is a Hadamard manifold with strictly negative curvature.
\end{abstract}

\tableofcontents

\section{Introduction}
 
\subsection{GIB manifolds} Inoue surfaces were constructed in 1975 by Masahisa Inoue \cite{Inoue74} in his work on the classification of complex surfaces of Kodaira class VII, and independently by Enrico Bombieri. They are defined as quotients of the complex manifold $\C \times \H^2$ by discrete groups of automorphisms. One distinguishes three types of such surfaces: $S^-$, $S^0$, and $S^+$. In this work, we focus on surfaces of type $S^0$. These manifolds are compact quotients of $\C \times \H^2$ by discrete subgroups of $\Sim (\R^2) \times \Isom(\H^2)$ - identifying $\C$ with $\R^2$ -, where $\Sim(M)$ denotes the group of similarities of the Riemannian manifold $M$ (see equation~\eqref{Sim} below). They can be viewed as suspensions of a $3$-torus over the circle. We review their explicit construction in Example~\ref{S0 construction} below.

In this paper, we study a class of quotient manifolds that naturally generalizes the construction of $S^0$-Inoue--Bombieri surfaces. We call these spaces generalized Inoue--Bombieri (GIB) manifolds, and we define them as follows:

\begin{definition}   \label{GIB def}
A manifold $M$ is said to admit a generalized Inoue--Bombieri structure (in short, it is a GIB manifold) if it is a compact connected quotient of a product $\R^q \times (N,g_N)$ by a subgroup $\Gamma$ of $\Sim(\mathbb{R}^q) \times \Isom(N, g_N)$,  where $(N, g_N)$ is a Riemannian manifold, and $\Gamma$ is not contained in $\Isom(\R^q) \times \Isom(N, g_N)$.
\end{definition}

Since a GIB manifold $M$ is a quotient of a product $\R^q \times N$ by a discrete group $\Gamma$ preserving the factors, the foliation induced by the submersion $\R^q \times N \to N$ descends to a canonical foliation $\mathcal F$ on $M$. This foliation carries a natural transverse Riemannian structure (see \cite{Mol}), as $\Gamma$ acts by isometries on $N$. We will show the following structural theorem:

\begin{theorem} \label{Foliation}
Let $M$ be a GIB manifold and let $\mathcal F$ be its canonical foliation. Then, the closures of the leaves of $\mathcal F$ are finitely covered by flat tori. Up to replacing $N$ by its orthonormal frame bundle, we still get a GIB manifold and the closures of the leaves determine a fibration by tori over a compact manifold.
\end{theorem}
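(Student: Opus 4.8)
The plan is to run Molino's structure theory for Riemannian foliations, but to make every step concrete using the product description of $M$, and then to upgrade the abstract conclusion (leaf closures are fibers of a fibration) into the sharp one (the fibers are flat tori up to finite cover). Throughout, write $p\colon \R^q\times N\to M$ for the quotient map and let $\sigma\colon\Gamma\to\Sim(\R^q)$, $\rho\colon\Gamma\to\Isom(N)$ be the two projections, with $\lambda\colon\Gamma\to\R_{>0}$ the homothety character of $\sigma$; the GIB hypothesis says exactly that $\lambda$ is nontrivial. Since $N$ is complete, $\Isom(N)$ is a Lie group (Myers--Steenrod), so $G:=\overline{\rho(\Gamma)}$ is a closed Lie subgroup, and because $G$ acts by isometries one checks $\overline{\rho(\Gamma)\cdot n}=G\cdot n$; hence the closure of the leaf $p(\R^q\times\{n\})$ is $p(\R^q\times G\!\cdot\! n)$, and every element of $\Gamma$ preserves $\R^q\times G\!\cdot\! n$.

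Second, I would carry out the frame-bundle reduction \emph{inside} the GIB category, which is precisely the mechanism behind the second sentence of the statement. Let $\mathrm{Fr}(N)$ be the orthonormal frame bundle of $(N,g_N)$, equipped with its natural Riemannian metric (Sasaki-type, built from the Levi--Civita connection and a bi-invariant metric on the structure group), for which $\Isom(N)\hookrightarrow\Isom(\mathrm{Fr}(N))$. Then $\Gamma$ acts on $\R^q\times\mathrm{Fr}(N)$ through $\Sim(\R^q)\times\Isom(\mathrm{Fr}(N))$, the $\Sim(\R^q)$-factor is unchanged so $\lambda$ is still nontrivial, and the quotient is compact because $\mathrm{Fr}(N)\to N$ is a proper fibre bundle with compact fibre. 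Thus $\hat M:=(\R^q\times\mathrm{Fr}(N))/\Gamma$ is again a GIB manifold. The decisive gain is that $\Isom(N)$, hence $G$, acts \emph{freely} on $\mathrm{Fr}(N)$ (an isometry fixing a frame is the identity), so every orbit $G\!\cdot\!\xi$ is diffeomorphic to $G$ and the lifted foliation $\hat{\mathcal F}$ is transversally parallelizable. Molino's theorem for transversally parallelizable foliations on a compact manifold then yields a locally trivial fibration $\hat\pi\colon\hat M\to W$ onto a compact manifold whose fibres are the leaf closures, where $W\cong \mathrm{Fr}(N)/G$ is the (Hausdorff, compact) orbit space; this gives the asserted fibration over a compact base.

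Third, and this is the heart, I would prove that each leaf closure $C=(\R^q\times G\!\cdot\!\xi)/\Gamma\cong(\R^q\times G)/\Gamma$, with $\gamma\cdot(x,g)=(\sigma(\gamma)x,\rho(\gamma)g)$, is a compact flat manifold, so that Bieberbach's theorem finishes the statement that $C$ is finitely covered by a flat torus. The model to keep in mind is the $S^0$ case, where $G=\R\rtimes\Z$, the identity component $G^0=\R$ is the horocyclic subgroup, and the lattice lives \emph{diagonally}: the vectors combining the $\R^q$-translation parts and the $G^0$-direction span $\R^{q+\dim G^0}$ and cut out $T^3$. In general I would (i) establish that the structural Lie algebra $\mathfrak g=\mathrm{Lie}(G^0)$ is abelian, so that $G^0\cong\R^d$; (ii) put on $\R^q\times G^0\cong\R^{q+d}$ the metric $f(g)\,\langle dx,dx\rangle+h$, where $h$ is a flat (translation-invariant) metric on $G^0\cong\R^d$ left-invariant under $G$, and $f$ is a positive function chosen to satisfy the cocycle relation $f(\rho(\gamma)g)=\lambda(\gamma)^{-2}f(g)$ so that it absorbs the similarity ratios; this is $\Gamma$-invariant and flat; and (iii) identify the subgroup $\Gamma_1\subseteq\Gamma$ acting by pure translations on $\R^q$ and by $G^0$-translations as a cocompact lattice $\cong\Z^{q+d}$ in $\R^{q+d}$, with $\Gamma/\Gamma_1$ finite (accounting for the orthogonal part $\theta(\sigma(\Gamma))$ and for $\pi_0(G)$). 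Together these present $C$ as a compact flat manifold finitely covered by $T^{q+d}$, and the fibres of $\hat\pi$ are exactly these flat tori up to finite cover.

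The main obstacle is step (i): unlike the general theory, where the structural Lie algebra of a Riemannian foliation with dense leaves need not be abelian (Heisenberg or $\widetilde{\SL}_2$ Lie foliations occur), here abelianness must be extracted from the GIB structure itself. I expect the right mechanism to be the presence of the homothety: the leaves are intrinsically Euclidean and the character $\lambda$ produces, on each leaf closure, a parallel field whose flow scales the transverse metric, and the global transverse Killing fields generating $G^0$ must commute through their interaction with this homothety and with the flat leaf geometry, forcing $\mathfrak g$ to act by translations. Making this rigorous — ruling out non-abelian closures $G$ compatible with a compact GIB quotient, and simultaneously extracting the diagonal lattice $\Gamma_1$ and the finiteness of $\Gamma/\Gamma_1$ from cocompactness of the $\Gamma$-action on $\R^q\times N$ — is where the real work lies; everything else is Molino's theory plus Bieberbach's theorem.
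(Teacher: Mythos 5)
Your outline reproduces the right skeleton (leaf closures as $\R^q$ times orbit closures of $\bar P$; frame bundle plus Molino for the fibration), but the first assertion of the theorem---the flat tori---rests entirely on your steps (i)--(iii), and you concede that (i) is unproved. This is a genuine gap, not a detail: abelianness of $G^0=\bar P^0$, the existence of your cocycle function, and the lattice property of your $\Gamma_1$ are precisely the content of the theorem, and the heuristic you offer (the homothety ``forcing'' the transverse Killing fields to commute) is not an argument. The paper closes all three at once, by a different mechanism than the one you sketch: it first proves a uniform displacement bound (Lemma~\ref{tech1}: there is $\delta>0$ with $d_N(p(x),x)\ge\delta$ for every $p\in P$ with nontrivial ratio, using compactness of $M$ and the fixed point of a strict similarity of $\R^q$), then an open-closed argument in the connected group $\bar P^0$ (Lemma~\ref{tech2}) showing the homothety character is trivial on $P\cap\bar P^0$ and extends continuously to $\bar P$; averaging against the Haar measure of $\bar P$ then yields a $P$-equivariant positive function on $N$ (Proposition~\ref{mainProp}), so rescaling $g_N$ places $\Gamma$ in Kourganoff's similarity setting. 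Abelianness of $\bar P^0$ is then imported as \cite[Lemma 4.1]{Kou} (Corollary~\ref{P0abelian}), the flat-tori statement as \cite[Theorem 1.10]{Kou}, and the lattice property of $\Gamma_0=\Gamma\cap(\Sim(\R^q)\times\bar P^0)$ as \cite[Lemma 4.18]{Kou}. Note moreover that even the well-posedness of your cocycle $f(\rho(\gamma)g)=\lambda(\gamma)^{-2}f(g)$ silently requires $\lambda$ to factor through $\rho(\Gamma)$ and to extend continuously to $G$; the former is \cite[Lemma 2.10]{FlaLCP} and the latter is exactly Lemma~\ref{tech2}, so your step (ii) already presupposes the paper's main technical work.

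Independently of the admitted gap, step (ii) contains a false claim: a metric of the form $f(g)\langle dx,dx\rangle+h$ on $\R^q\times\R^d$ with $f$ nonconstant is not flat---take $q=d=1$ and $f(g)=e^{2g}$, which is the hyperbolic plane. If the cocycle were genuinely nontrivial on the subgroup preserving a leaf closure, no flat invariant metric of your form would exist, and Bieberbach could not be invoked. The correct statement, which is what Lemma~\ref{tech2} delivers, is that $\lambda\equiv 1$ on $\Gamma_0$, so that $\Gamma_0$ acts by honest isometries of the untwisted flat product $\langle dx,dx\rangle+h$, flatness comes for free, and Bieberbach applies to the compact quotient. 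By contrast, your second step is sound and coincides with the paper's: passing to the orthonormal frame bundle keeps the GIB structure, $\Isom(N)$ acts freely on frames, and Molino's theory turns the closures of leaves into the fibers of a fibration over a compact base.
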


\subsection{LCP manifolds} It turns out that GIB manifolds are very similar to a recently studied class of manifolds, namely the locally conformally product (LCP) manifolds. These were introduced in \cite{FlaLCP} and defined as follows:

\begin{definition} [LCP manifolds] \label{DefLCP}
Let $M$ be a compact quotient of a Riemannian product $\tilde M := \R^q \times (N,g_N)$, where $q \ge 1$ and $(N, g_N)$ is simply connected, irreducible and non-complete, by a discrete subgroup $\Gamma$ of $\Sim(\tilde M) \cap (\Sim(\R^q) \times \Sim(N))$ not contained in $\Isom(\tilde M)$, acting freely and properly. The manifold $M := \Gamma \backslash \tilde M$ is called an LCP manifold.
\end{definition}

LCP manifolds arose from the study of torsion-free connections on compact conformal manifolds. In the situation where the connection preserves the conformal structure, it is called a {\em Weyl connection}. The Weyl connections that are locally but not globally the Levi-Civita connections of a metric in the conformal class are called closed, non-exact. It was believed in the early days of the theory that closed, non-exact Weyl connections on compact conformal manifolds were either flat or irreducible \cite{BeMo}. However, this conjecture was disproved by Matveev and Nikolayevsky \cite{MN, MN2}. The picture was later completed by Kourganoff \cite{Kou}, who showed that a third and last possibility could occur:

\begin{theorem*} [Kourganoff]
Let $M$ be a compact conformal manifold endowed with a closed, non-exact Weyl connection $\nabla$. If $\nabla$ is neither flat nor irreducible, then $M$ admits an LCP structure, and, using the notations of Definition~\ref{DefLCP}, the lift of $\nabla$ to $\tilde M$ is the Levi-Civita connection of $\tilde M$.
\end{theorem*}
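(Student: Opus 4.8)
The plan is to pass to the universal cover, where the closed Weyl connection becomes the Levi-Civita connection of a genuine metric, to reformulate the statement as a structure result for a simply connected Riemannian manifold carrying a cocompact similarity action, and then to extract the product structure from the de Rham decomposition.

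First I would unwind the conformal structure. Fix a metric $g$ in the conformal class and let $\theta$ be the Lee form of $\nabla$ with respect to $g$; it is closed because $\nabla$ is closed. Pulling back to the universal cover $\pi\colon\tilde M\to M$, the form $\pi^*\theta=d\varphi$ becomes exact, and a direct computation shows that the rescaled metric $g_0:=e^{2\varphi}\,\pi^*g$ satisfies $\nabla^{g_0}=\pi^*\nabla$; thus the lift of $\nabla$ is the Levi-Civita connection of $g_0$. For a deck transformation $\gamma\in\Gamma:=\pi_1(M)$ one has $d(\gamma^*\varphi-\varphi)=\gamma^*\pi^*\theta-\pi^*\theta=0$, so $\gamma^*\varphi=\varphi+c_\gamma$ with $c_\gamma$ constant and hence $\gamma^*g_0=e^{2c_\gamma}g_0$. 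Therefore $\Gamma$ acts on $(\tilde M,g_0)$ by similarities, freely and properly with compact quotient $M$, and the homothety character $\gamma\mapsto c_\gamma$ is \emph{non-trivial} precisely because $\theta$ is non-exact, i.e.\ $\Gamma\not\subset\Isom(\tilde M,g_0)$. This reduces the theorem to producing the product decomposition of $(\tilde M,g_0)$.

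Next I would apply the de Rham--Wu decomposition. As $\nabla=\nabla^{g_0}$ is not irreducible, its holonomy is reducible, the tangent bundle splits into parallel flat and non-flat irreducible subbundles, and the simply connected version of the de Rham theorem yields a Riemannian product $\tilde M\cong\tilde M_{\mathrm{fl}}\times N_1\times\cdots\times N_k$ with $\tilde M_{\mathrm{fl}}$ flat and each $N_i$ non-flat irreducible. A similarity shares its Levi-Civita connection with the metric it rescales, so it preserves these parallel distributions, permuting mutually isometric factors and scaling every factor by one common ratio $\lambda$. The two hypotheses enter here: $\nabla$ not flat forces a genuine non-flat factor ($k\ge1$), and $\nabla$ not irreducible forces the decomposition to be non-trivial.

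The heart of the proof, and the step I expect to be hardest, is the completeness analysis; it is essentially Kourganoff's main structural theorem. If $(\tilde M,g_0)$ were complete, the classical fact that a complete Riemannian manifold admitting a homothety of ratio $\ne1$ must be flat would contradict the non-flatness of $\nabla$; hence $g_0$ is incomplete, and the problem becomes one of locating this incompleteness among the de Rham factors. Choosing $\gamma_0\in\Gamma$ with ratio $\lambda<1$ gives a contracting homothety, and the crux is to use its dynamics together with the cocompactness of the $\Gamma$-action to prove that the flat directions integrate to a \emph{complete} Euclidean factor $\R^q$ with $q\ge1$, while the incompleteness is confined to the non-flat directions, which assemble into a \emph{single} irreducible incomplete factor $N$. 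This rigidity for cocompact similarity structures is the genuinely difficult part, and I would expect to invoke Kourganoff's analysis of the attractor of $\gamma_0$ and of the metric completion of $(\tilde M,g_0)$ to carry it out.

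Granting the decomposition $\tilde M\cong\R^q\times N$ with $N$ simply connected, irreducible and incomplete, the conclusion follows at once. The similarities preserve the two factors and scale them by a common ratio, so $\Gamma\subset\bigl(\Sim(\R^q)\times\Sim(N)\bigr)\cap\Sim(\tilde M)$ and $\Gamma\not\subset\Isom(\tilde M)$, the action being free and proper with compact quotient $M$; this is exactly an LCP structure in the sense of Definition~\ref{DefLCP}. By construction the lift of $\nabla$ is $\nabla^{g_0}$, the Levi-Civita connection of the product metric $g_0=g_{\R^q}\oplus g_N$, which is the final assertion.
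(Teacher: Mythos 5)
Your reduction to the similarity setting is correct and standard: on the universal cover the closed Lee form becomes exact, $g_0=e^{2\varphi}\pi^*g$ has $\pi^*\nabla$ as its Levi-Civita connection, the deck group acts by similarities, and the homothety character $\gamma\mapsto c_\gamma$ is non-trivial exactly when $\theta$ is non-exact; this also delivers the final assertion about the lift of $\nabla$ essentially for free. Be aware, though, that the paper itself contains no proof of this statement: it is quoted as Kourganoff's theorem, with \cite{Kou} as the source and \cite{FlaZeg} as a newer synthetic proof, so the only comparison available is with how those references begin --- and your opening reduction does match that setup.

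The genuine gap is your de Rham step, and it sits exactly where the whole difficulty of the theorem lives. The global de Rham decomposition theorem requires completeness, and as you yourself observe two paragraphs later, $g_0$ is necessarily incomplete (a complete manifold admitting a non-isometric homothety is flat, contradicting the hypothesis that $\nabla$ is not flat). A simply connected \emph{incomplete} manifold with reducible holonomy need not split globally as a product --- a round disk in flat $\R^2$ has trivial holonomy yet is not a Riemannian product --- so you cannot first ``yield a Riemannian product $\tilde M_{\mathrm{fl}}\times N_1\times\cdots\times N_k$'' and only afterwards locate the incompleteness among the factors. Proving that a global splitting exists at all, that the flat directions form a \emph{complete} factor $\R^q$ with $q\ge1$, and that the non-flat directions assemble into a \emph{single} irreducible incomplete factor $N$, is precisely Kourganoff's main structural result (hence the title of \cite{Kou}, ``Similarity structures and de Rham decomposition''); so when you ``invoke Kourganoff's analysis of the attractor of $\gamma_0$'' at the crux, you are invoking the statement under proof. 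As a citation-level argument your proposal is faithful to the paper's treatment, but as a proof it is circular at its core, and the order of operations --- decompose first, analyze completeness second --- would have to be reversed to follow either \cite{Kou} or \cite{FlaZeg}.
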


A new synthetic proof of this result was recently given by the authors of the present paper in \cite{FlaZeg}, where the study of transversal similarity structures, playing a significant role in the analysis, was further developed.

It can be shown that $S^0$-Inoue--Bombieri surfaces admit an LCP structure. In light of this, one may ask more generally whether every GIB manifold admits an LCP structure. In this paper, we prove that these two classes actually coincide. More precisely we have:

\begin{theorem}[GIB-LCP equivalence] \label{Equivalence}
Let $M = \Gamma \backslash (\R^q \times N)$ be a GIB manifold. Then, there exists a function $f : N \to \R$ such that $\Gamma$ acts by similarities, not all isometries, on $\R^q \times (N, e^{2f} g_N)$. In particular, $M$ is an LCP manifold.

Conversely, if $M = \Gamma \backslash (\R^q \times N)$ is an LCP manifold. Then, there exists a function $f : N \to \R$ such that $\Gamma \subset \Sim(\R^q) \times\Isom (N, e^{2f} g_N)$. In particular, $M$ is a GIB manifold.
\end{theorem}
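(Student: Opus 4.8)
The plan is to observe that both implications reduce to a single scalar functional equation, and then to solve that equation using the torus structure provided by Theorem~\ref{Foliation}. Write $\gamma = (A_\gamma, B_\gamma)$ for $\gamma\in\Gamma$, let $\lambda_\gamma>0$ be the dilation factor of $A_\gamma\in\Sim(\R^q)$, and set $c(\gamma)=\log\lambda_\gamma$, a homomorphism $\Gamma\to(\R,+)$. For any $f\colon N\to\R$ one computes $B_\gamma^*(e^{2f}g_N)=e^{2(f\circ B_\gamma)}\,B_\gamma^*g_N$; hence, in the GIB case (where $B_\gamma$ is a $g_N$-isometry), the pair $\gamma$ is a similarity of $\R^q\times(N,e^{2f}g_N)$ with matching factor $\lambda_\gamma$ if and only if $f\circ B_\gamma - f = c(\gamma)$, whereas in the LCP case (where $B_\gamma$ is a $g_N$-similarity of factor $\lambda_\gamma$) the map $B_\gamma$ becomes a $e^{2f}g_N$-isometry if and only if $f\circ B_\gamma - f = -c(\gamma)$. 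Thus both statements follow once we produce $f\colon N\to\R$ solving $f\circ B_\gamma - f = \varepsilon\,c(\gamma)$ (with $\varepsilon=\pm1$) for every $\gamma$.

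First I would check that the cocycle descends to the image group. Let $p\colon\Gamma\to\Isom(N)$ (resp.\ $\Sim(N)$), $\gamma\mapsto B_\gamma$, and $\bar\Gamma=p(\Gamma)$. Since a similarity of $\R^q$ with $\lambda\neq1$ has a fixed point, an element $(A_\gamma,\mathrm{id})$ with $\lambda_\gamma\neq1$ would fix a whole slice $\{*\}\times N$, contradicting freeness of the $\Gamma$-action; hence $\ker p\subseteq\ker c$ and $c$ factors through a homomorphism $\bar c\colon\bar\Gamma\to\R$. In particular the equation to be solved only involves the $\bar\Gamma$-action on $N$.

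Next I would build an equivariant potential and then average it. As $M$ is compact, the conformal class of the product metric $\tilde g=g_{\R^q}\oplus g_N$ (resp.\ the LCP conformal class) descends to $M$; choosing a background Riemannian metric on $M$ in this class and pulling it back to $\tilde M$ gives $e^{2\varphi}\tilde g$ with $\varphi\colon\R^q\times N\to\R$ satisfying $\varphi\circ\gamma-\varphi=-c(\gamma)$. This $\varphi$ depends on both factors, and the whole difficulty is to remove the $\R^q$-dependence. Here I would invoke Theorem~\ref{Foliation}: after replacing $N$ by its orthonormal frame bundle and passing to a finite cover, the leaf closures of $\mathcal F$ are flat tori $\R^q/\Lambda_x$, with $\Lambda_x\subset\Isom(\R^q)$ the cocompact lattice of translations coming from $\{A_\gamma : \gamma\in\ker p\}$. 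For $\gamma\in\ker p$ one has $c(\gamma)=0$, so $\varphi(A_\gamma y,x)=\varphi(y,x)$ and $\varphi(\cdot,x)$ is $\Lambda_x$-periodic. I would then define $f(x)$ to be the normalized average of $\varphi(\cdot,x)$ over the torus $\R^q/\Lambda_x$. Using $\varphi(A_\gamma y,B_\gamma x)=\varphi(y,x)-c(\gamma)$ together with the change of variables $y\mapsto A_\gamma y$ (whose Jacobian $\lambda_\gamma^q$ is absorbed by the normalization, since $A_\gamma$ carries $\R^q/\Lambda_x$ to $\R^q/\Lambda_{B_\gamma x}$) yields $f\circ B_\gamma - f=-c(\gamma)$, i.e.\ the desired equation with $\varepsilon=-1$; the case $\varepsilon=+1$ is obtained by replacing $f$ with $-f$.

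The main obstacle is precisely this last step: controlling the lattices $\Lambda_x$ and the smoothness of $x\mapsto f(x)$ as the torus fibers vary, which is exactly where Theorem~\ref{Foliation} (and the reductions to the frame bundle and to a finite cover that it permits) is essential. Finally I would record that the resulting conformal metric satisfies the properties required by Definition~\ref{DefLCP}: since $f\circ B_\gamma=f+c(\gamma)$ with $c\not\equiv0$, iterating a $\gamma$ with $c(\gamma)\neq0$ forces $e^{2f}\to0$ along the corresponding $\bar\Gamma$-orbit, so $(N,e^{2f}g_N)$ is non-complete (and, symmetrically, in the converse direction the conformal change restores completeness), while simple-connectedness and irreducibility are inherited from the class of manifolds $N$ under consideration.
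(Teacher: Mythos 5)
Your reduction of both directions to the scalar equation $f\circ B_\gamma-f=\varepsilon\,c(\gamma)$ is sound and is equivalent (multiplicatively, via $e^{f}$) to the paper's Proposition~\ref{mainProp}. But the construction of $f$ breaks down at its central step. You average $\varphi(\cdot,x)$ over a torus $\R^q/\Lambda_x$ with $\Lambda_x$ the lattice of translations $\{A_\gamma:\gamma\in\ker p\}$. Under the standing hypothesis that $\Gamma$ contains a strict similarity, however, the projection $\Gamma\to P$ is \emph{injective} (this is \cite[Lemma 2.10]{FlaLCP}, following \cite[Lemma 4.17]{Kou}, used explicitly in Section~\ref{SectionQuotient}); already for the Inoue surface $\Z^3\rtimes_A\Z$ of Example~\ref{S0 construction}, no nonzero vector of $\Z^3$ lies in the contracting plane, so $\ker p$ is trivial. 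Hence $\Lambda_x=\{0\}$, the ``fiber'' $\R^q/\Lambda_x$ is all of $\R^q$, $\varphi(\cdot,x)$ has no periodicity, and your average does not exist. The compact tori of Theorem~\ref{Foliation} are the \emph{closures} of the leaves and have dimension $q+\dim{\bar P}^0$: they are not of the form $(\R^q/\Lambda_x)\times\{x\}$, and averaging over them yields a function on the leaf-closure space, constant along ${\bar P}^0$-orbits, rather than a function on $N$ with the required equivariance. Making any such closure-averaging well defined is precisely where the paper's key input enters, namely Lemmas~\ref{tech1} and~\ref{tech2}: the ratio homomorphism $\rho$ is locally constant on $P$ and extends continuously to $\bar P$, being $\equiv 1$ on ${\bar P}^0$. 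Your argument never establishes this (nothing a priori prevents elements of $P$ with ratio $\neq 1$ from accumulating at the identity), and the paper then solves the equation by integrating over the group $\bar P$ against Haar measure with weight $\tilde\rho^{-1}$, not over torus fibers. There is also a circularity relative to this paper: the proof of Theorem~\ref{Foliation} uses Corollaries~\ref{corMetric} and~\ref{P0abelian} (i.e.\ the GIB$\to$LCP direction) in order to apply Kourganoff's results, so it cannot be invoked to prove Theorem~\ref{Equivalence}.

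Two secondary problems. First, in the GIB direction the conformal class of $\tilde g=g_{\R^q}\oplus g_N$ does \emph{not} descend to $M$: for $\lambda_\gamma\neq 1$ one has $\gamma^*\tilde g=\lambda_\gamma^2 g_{\R^q}\oplus g_N$, which is not pointwise conformal to $\tilde g$ -- arranging a $\Gamma$-invariant conformal class on a product metric is essentially the statement being proved. (The mere existence of a smooth $\varphi$ on $\tilde M$ with $\varphi\circ\gamma-\varphi=-c(\gamma)$ can be repaired by a partition-of-unity argument, e.g.\ $\varphi=\sum_{\gamma\in\Gamma}c(\gamma)\,\chi\circ\gamma$ with $\sum_\gamma\chi\circ\gamma\equiv 1$, but the passage from $\varphi$ on $\R^q\times N$ to $f$ on $N$ remains the unfilled gap above.) Second, your closing claim that $e^{2f}\to 0$ along a $\bar\Gamma$-orbit forces $(N,e^{2f}g_N)$ to be incomplete is not a proof of incompleteness (a conformal factor may tend to $0$ along a divergent sequence on a complete manifold); the paper instead obtains the LCP properties from Corollary~\ref{P0abelian} and the results of \cite{Kou}.
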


\subsection{OT manifolds} Inoue--Bombieri surfaces of type $S^0$ fall into the broader family of OT manifolds, constructed by Oeljeklaus and Toma in \cite{OT}. These are defined as compact quotients of $\C^t \times (\H^2)^s$ - for some positive integers $s$ and $t$ - by a discrete subgroup of $\mathrm{Aut} (\C)^t \times \mathrm{Aut}(\H^2)^s$.

Since it was proved in \cite{FlaLCP} that OT manifolds always admit an LCP structure, they also carry a GIB structure by Theorem~\ref{Equivalence}. However, we should emphasize that this GIB structure is not unique in many cases.

Moreover, OT manifolds provide an illustrative example of a more general - and arguably more natural - framework obtained by modifying Definition~\ref{GIB def} so that $\Gamma$ is a subgroup of $G := \Aff(\mathbb{R}^q) \times \Isom (N)$, where  $\Aff(\mathbb{R}^q)$  denotes the full affine group of $\mathbb{R}^q$. The group $G$ may be viewed as the automorphism group of the Levi-Civita connection on $\mathbb{R}^q \times N$, provided $N$ has no local flat factor in its de Rham decomposition. We believe that this setting would offers a natural direction in which to extend the results of this paper.

\subsection{Bieberbach rigidity in the symmetric case} With our new understanding of GIB manifolds, we would like to investigate how to construct explicit examples. In particular, how can we find admissible discrete groups $\Gamma$ in Definition~\ref{GIB def}? We will address this question in the specific case where $(N,g_N)$ is a symmetric space. In this setting, the product $\R^q \times (N,g_N)$ can be viewed as a homogeneous Riemannian manifold $G/H$, and the quotient is obtained via the left action of a discrete group $\Gamma \subset G$ on $G/H$. Note that the structure of $\Gamma$ depends strongly on the group $G$, that is, on how the manifold is realized as a homogeneous space. This problem of finding a suitable $\Gamma$ has already been considered in other particular cases, for example in \cite{KaTh}, where the authors studied the situation in which $G$ is reductive.

One way to construct a discrete cocompact group acting on $\R^q \times N$ is to look for a connected subgroup of $\Sim(\R^q) \times \Isom(N)$ acting properly and transitively on the product $\R^q \times N$ and to take a lattice in this group. This leads to the following question: are all admissible groups $\Gamma$ lattices in a connected Lie subgroup of $\Sim(\R^q) \times \Isom(N)$ acting properly? This property, which can be formulated on any homogeneous manifold, is known as {\em Bieberbach rigidity} \cite{FriGol}. We provide a more detailed discussion of this notion in Section~\ref{sectionBieberbach}. In our setting, we will prove:

\begin{theorem} [Bieberbach Rigidity] \label{BRigidity}
Let $(N, g_N)$ be a simply connected symmetric space and let $q > 0$ be an integer. Let $\Gamma$ be a discrete subgroup of $\Sim(\R^q) \times \Isom(N, g_N)$ acting properly, freely and cocompactly on $\R^q \times N$. Then, up to taking a finite index subgroup of $\Gamma$, there exists a connected subgroup $L$ of $\Sim(\R^q) \times \Isom(N, g_N)$ acting properly and transitively on $\R^q \times N$ such that $\Gamma$ is a lattice in $L$.
\end{theorem}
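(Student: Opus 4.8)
The plan is to realise $\Gamma$ as a cocompact lattice in a \emph{syndetic hull}: a connected Lie subgroup $L\subset\Sim(\R^q)\times\Isom(N,g_N)$ in which $\Gamma$ is a lattice and which acts properly and transitively on $\R^q\times N$. The construction proceeds in three stages: first isolate the scaling direction coming from the similarity factor, then use Theorem~\ref{Foliation} to control the transverse behaviour on $N$, and finally assemble $L$ and verify properness and transitivity — the last being the genuine difficulty, since $\Sim(\R^q)$ does not itself act properly on $\R^q$.

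First I would introduce the \emph{similarity ratio} character $\rho\colon\Sim(\R^q)\times\Isom(N)\to\R_{>0}$, $\rho\big((x\mapsto\lambda Ax+b),h\big)=\lambda$, and set $\Gamma_0:=\Gamma\cap\ker\rho$, the subgroup acting by isometries on the product. The quotient $\rho(\Gamma)$ is a finitely generated subgroup of $\R_{>0}$; I expect properness, together with the coupling found below, to force it to be infinite cyclic, so that after passing to a finite-index subgroup $\Gamma=\Gamma_0\rtimes\langle\gamma_0\rangle$ is a suspension, with $\gamma_0$ carrying the only genuine dilation. On the Euclidean factor, classical Bieberbach theory applied to the image of $\Gamma_0$ in $\Isom(\R^q)$ lets me assume, again up to finite index, that this image is a lattice of translations, killing the $O(q)$-holonomy.

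The transverse control comes from Theorem~\ref{Foliation}. Writing $\pi_N$ for the projection to $\Isom(N)$, the canonical foliation $\F$ is transversely Riemannian with holonomy $\pi_N(\Gamma)$; by Molino theory \cite{Mol} its leaf closures are orbits of the structural (commuting) algebra, and Theorem~\ref{Foliation} says these are flat tori. Hence the structural algebra is abelian and $T:=\overline{\pi_N(\Gamma)}_0$ is a compact torus, normalised by $\pi_N(\Gamma)$. I would then use the de~Rham decomposition $N=N_0\times N_-\times N_+$ of the symmetric space into flat, non-compact and compact factors. The compact factor contributes the connected compact group $\Isom(N_+)_0$, which acts properly and transitively and can be carried along as a direct factor of $L$. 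On the remaining factors the toral closure, combined with the cocompactness of the total action — which, since the $\R^q$-fibres are non-compact, forbids $\pi_N(\Gamma)$ from being a lattice in any non-compact semisimple $\Isom(N_-)$ — reduces the core of $\Gamma$ to a virtually polycyclic group sitting inside the solvable Iwasawa group $AN$ of $N_-$ times the translations of $N_0$ and $\R^q$.

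With the core virtually polycyclic, I would invoke the existence of the syndetic hull of Fried--Goldman \cite{FriGol} to produce a connected solvable $L_0$ containing it as a cocompact lattice, and set $L:=L_0\times\Isom(N_+)_0$. That $L$ lies in $\Sim(\R^q)\times\Isom(N)$ and respects the product follows because $\Gamma$, hence its hull, preserves the de~Rham splitting of the simply connected space $\R^q\times N$. Transitivity I would obtain by ensuring that $L$ surjects on each de~Rham factor onto a connected group acting transitively there — the translations on $\R^q$ and on $N_0$, the Iwasawa group $AN$ on $N_-$, and $\Isom(N_+)_0$ on $N_+$ — which is forced because cocompactness makes each factor-projection of $\Gamma$ relatively dense. \textbf{The hard part is properness.} Because the dilations of $\Sim(\R^q)$ fix a point with non-compact stabiliser $CO(q)$, a one-parameter subgroup of $L$ that dilates $\R^q$ while staying bounded in $N$ would destroy properness. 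The resolution, which is exactly what the LCP structure of Theorem~\ref{Equivalence} encodes, is that $\rho|_L$ is proportional to translation along the flat $A$-direction of $N_-$: the conformal factor $e^{2f}$ furnished by Theorem~\ref{Equivalence} is, up to constants, a linear/Busemann coordinate of that flat, so that $\rho(l)\to\infty$ forces $\pi_N(l)$ to move a point of $N$ to infinity. This coupling, inherited from the proper action of $\Gamma$, guarantees that divergence in $L$ produces divergence in $\R^q\times N$; it is also what pins $\rho(\Gamma)$ down as infinite cyclic in the first step. Verifying this coupling at the level of the hull, and checking that $\Gamma$ remains a lattice after the finite-index reductions, is where the main technical effort lies.
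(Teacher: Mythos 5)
Your overall aim --- a connected hull in which $\Gamma$ sits as a cocompact lattice, with the similarity ratio coupled to motion in $N$ so that properness survives --- is the same as the paper's, and your diagnosis that properness hinges on such a coupling is correct. But three structural claims on which your construction rests are false in this setting. (i) $\rho(\Gamma)$ need not be cyclic: OT manifolds with $t=1$ and $s\ge 2$ are GIB manifolds with $N=(\H^2)^s$ symmetric (a totally positive unit $u$ acts on $\C=\R^2$ as a similarity of ratio $\vert\sigma(u)\vert$, $\sigma$ the complex embedding), and the ratio group is the image of the rank-$s$ unit group, in general dense in $\R^*_+$; so no finite-index subgroup is a suspension $\Gamma_0\rtimes\langle\gamma_0\rangle$. (ii) Classical Bieberbach theory does not apply to the image of $\Gamma_0$ in $\Isom(\R^q)$: that image is typically non-discrete --- already for $S^0$-Inoue surfaces, $\Z^3$ projects onto a dense subgroup of translations of $\R^2$ --- hence is not crystallographic, and an infinite rotation part cannot be killed by passing to finite index. (iii) $\overline{\pi_N(\Gamma)}^0={\bar P}^0$ is \emph{not} a compact torus: Lemma~\ref{P0notcompact} shows it is never compact (for Inoue surfaces it is the one-parameter group $\R$ of horocyclic translations); the leaf closures in Theorem~\ref{Foliation} are tori because $\Gamma_0$ is a lattice in $\R^q\times{\bar P}^0$, not because ${\bar P}^0$ is compact.

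The decisive gap is the reduction to a virtually polycyclic core inside the Iwasawa group $AN$ of $N_-$, followed by the Fried--Goldman syndetic hull \cite{FriGol}. Take $\Gamma=\Gamma_1\times\Gamma_2$ with $\Gamma_1\subset\Sim(\R^2)\times\Isom(\H^3)$ the group of Example~\ref{Ex1} and $\Gamma_2\subset\Isom(\H^2)$ a torsion-free cocompact surface group: this satisfies all hypotheses of Theorem~\ref{BRigidity} with $N=\H^3\times\H^2$ symmetric, yet $\Gamma$ contains nonabelian free subgroups, so it is not virtually polycyclic, lies in no solvable subgroup (in particular $P$ lies in no conjugate of $AN$), and has no syndetic hull in the sense of \cite{FriGol}. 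Your argument that cocompactness forbids $\pi_N(\Gamma)$ from being a lattice only applies to the projection to the \emph{full} group $\Isom(N)$; projections to individual irreducible factors can perfectly well be cocompact lattices, as here. The paper's route avoids solvability altogether: after splitting off the compact factor, it passes to the adjoint representation and sets $H=\{A\in\Aut(\g):\ A(\p^0)=\p^0\}$, the normalizer of ${\bar P}^0$, which is algebraic and hence has finitely many components, so that up to finite index $\bar P\subset H^0$, with $P$ cocompact in $H^0$ (Lemma~\ref{lemmaCocompact}) and $H^0$ transitive on $N$ by Montgomery's theorem \cite{Mon} (Lemma~\ref{transitivity}). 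The hull is then $L=\{(s,h)\in\Sim^+(\R^q)\times H^0:\ \rho(s)=\varphi(h)^{-1/q}\}$ with $\varphi(h)=\vert\det(\Ad_h\vert_{\p^0})\vert$: the coupling you intuited via Busemann functions is realized purely algebraically, since $\Gamma$ preserves the lattice $\Gamma_0\subset\R^q\times{\bar P}^0$ and therefore acts on $\R^q\oplus\p^0$ with determinant $\pm1$, forcing $\Gamma\subset L$ (Lemma~\ref{le1}); properness of $L$ is then immediate because $H^0$ is closed in $\Isom(N,g_N)$, with no geometric estimate needed (Lemma~\ref{le2}). Note that this $L$ may contain semisimple factors, as in the product example above --- precisely what a syndetic-hull strategy can never produce.
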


We believe that Theorem~\ref{BRigidity} should hold for a general homogeneous Riemannian space $N$, but proving this appears to require a completely different technique. In addition, we remark, continuing the discussion of OT manifolds, that the Bieberbach-type rigidity established in Theorem~\ref{BRigidity} is analogous to the fact that OT manifolds are solvmanifolds \cite[Section 6]{Kas}.

\subsection{Classification results} The main objective would be to achieve complete classification of GIB manifolds; that is to determine which manifolds $N$ and groups $\Gamma$ may arise in Definition~\ref{GIB def}. The Bieberbach rigidity established earlier provides a first substantial step toward this goal. The next step is to classify the possible factors $N$. In the last part of the paper, we undertake this classification in the case where $N$ is a Hadamard manifold of strictly negative curvature.

A first illustrative example is given by the hyperbolic space $\H^n$, for which we will describe the projection of $\Gamma$ onto $\Isom(\H^n)$. We then move on to the general case of Hadamard manifolds with strictly negative curvature, using the same approach as in the case of the hyperbolic space. In this framework, we show:

\begin{theorem}[Classification in the case of negative curvature] \label{Classification Negative}
Let $(N^n, g_N)$ be a complete Riemannian manifold of strictly negative curvature and let $q > 0$ be an integer. Assume that $\Gamma \backslash (\R^q \times N)$ is a GIB manifold. Then, one has an isometry
\begin{equation} \label{isometryN}
(N, g_N) \simeq (\R \times \R^{n-1}, dt^2 + \langle S_t \cdot , S_t \cdot \rangle)
\end{equation}
where $S_t \in \GL(\R^{n-1})$ for all $t \in \R$. Denoting by ${\bar P}$ the closure of the projection of $\Gamma$ onto $\Isom(N)$ and by ${\bar P}^0$ its identity connected component, ${\bar P}^0$ is exactly the set of translations of the factor $\R^{n-1}$ in the identification \eqref{isometryN}.

If moreover $N$ is a homogeneous space, then either it is the hyperbolic space $\H^n$ or $\Isom(N)$ fixes a unique point at infinity and there exists an endomorphism $A$ of $\R^{n-1}$ such that $S_t = e^{tA}$ in \eqref{isometryN}.
\end{theorem}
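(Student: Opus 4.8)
The plan is to study the closure $\bar P = \overline{p(\Gamma)}$ of the projection $p \colon \Gamma \to \Isom(N, g_N)$, which is a closed, hence Lie, subgroup of the isometry group of the complete negatively curved manifold $N$, with identity component $\bar P^0$. First I would record that $\bar P$ acts cocompactly on $N$: by Theorem~\ref{Foliation} the closures of the leaves of the canonical foliation $\mathcal F$ fibre over a compact base, and in the $N$-direction the closure of a leaf is exactly an orbit closure $\bar P \cdot n_0$, so the orbit space $\bar P \backslash N$ is compact. Moreover $N$ must be non-compact, since otherwise $\Isom(N)$ would be compact and an iterate of a scaling element $\gamma = (s, \phi) \in \Gamma$, whose $\R^q$-ratio $\lambda \neq 1$ (such $\gamma$ exists as $\Gamma \not\subset \Isom(\R^q) \times \Isom(N)$), would contradict proper discontinuity; hence $\bar P^0$ is a non-compact connected group of isometries of $N$.

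Next I would produce a point at infinity fixed by $\bar P^0$. The $N$-part $\phi = p(\gamma)$ of the scaling element normalises $\bar P^0$ and cannot be elliptic: were its $N$-orbits bounded, the iterates $\gamma^k$, whose $\R^q$-ratios are $\lambda^k$ tending to $0$ or $\infty$, would violate proper discontinuity and cocompactness. Hence $\phi$ fixes a non-empty subset of $\partial N$ (an axis endpoint if it is hyperbolic, a single point if parabolic), and since $\bar P^0$ is connected and normalised by $\phi$ it preserves, and therefore fixes, such a point $\xi \in \partial N$. Using the Busemann function $b_\xi$ and the geodesics asymptotic to $\xi$, I would pass to horospherical coordinates, writing $N \simeq \R \times H$ with $t = b_\xi$ the $\R$-coordinate and $H = b_\xi^{-1}(0)$ a horosphere, so that $g_N = dt^2 + g_t$, where $g_t$ is the induced metric at the level $\{t\}$. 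The crucial claim is that $\bar P^0$ preserves each horosphere and acts simply transitively on it: the isometry $\phi$ translates the $t$-coordinate (realising on $N$ the dilation forced by $\lambda$ on $\R^q$) and thus contributes the missing one-dimensional transverse direction, so that cocompactness of $\bar P$ forces $\dim \bar P^0 = n-1$ together with transitivity of $\bar P^0$ on the horospheres.

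Granting this, the isometry \eqref{isometryN} follows cleanly. Each $\bar P^0$-orbit is a horosphere $\{t\} \times H$, and by Theorem~\ref{Foliation} its image in $M$ is a flat torus; hence $(H, g_t)$ is flat and $\bar P^0$ surjects onto this torus with discrete kernel, so $\bar P^0$ is abelian, isomorphic to $\R^{n-1}$, and acts by translations. Identifying $H$ with $\R^{n-1}$ through a single $\bar P^0$-orbit, the flat, translation-invariant metric $g_t$ takes the form $\langle S_t \cdot, S_t \cdot \rangle$ for some $S_t \in \GL(\R^{n-1})$, which yields both the stated isometry and the description of $\bar P^0$ as exactly the translations of the factor $\R^{n-1}$.

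For the homogeneous case I would invoke Heintze's classification of homogeneous manifolds of strictly negative curvature as solvable Lie groups. Flatness of the horospheres, already established, forces the nilpotent part to be abelian, so $N$ is a Heintze group $\R^{n-1} \rtimes_A \R$; comparing its transitive solvable action with the splitting \eqref{isometryN} gives $S_t = e^{tA}$ after normalisation. Finally, either $A$ is, up to scale, the identity, in which case $N$ is the symmetric space $\H^n$ and $\Isom(N)$ acts transitively on $\partial N$, or $A$ is not scalar, in which case the structure of the Heintze group shows that $\Isom(N)$ fixes precisely the point $\xi$, this being the unique fixed point at infinity (non-flat horospheres, and hence the other rank-one symmetric spaces, being already excluded by flatness). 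The main obstacle is the middle step: proving that $\bar P^0$ is genuinely a codimension-one, simply transitive group of horospherical translations—that is, ruling out interior or lower-dimensional orbits—which is where the negative-curvature structure theory, cocompactness of $\bar P$, and the scaling dynamics of $\gamma$ must all be combined.
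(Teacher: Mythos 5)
Your proposal follows the paper's broad skeleton (fixed point at infinity, horospherical splitting $N \simeq \R \times H$, transitivity of ${\bar P}^0$ on horospheres, flatness, then the homogeneous dichotomy), but it has two genuine gaps at exactly the load-bearing points. First, the fixed-point step: your claim that ${\bar P}^0$ fixes an endpoint of $\phi$ because it is ``connected and normalised by $\phi$'' does not follow. Normality gives $\phi {\bar P}^0 \phi^{-1} = {\bar P}^0$, but for $g \in {\bar P}^0$ one has $g \cdot \mathrm{Fix}(\phi) = \mathrm{Fix}(g \phi g^{-1})$, and $g\phi g^{-1} \neq \phi$ in general, so nothing forces ${\bar P}^0$ to preserve $\mathrm{Fix}(\phi)$. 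Likewise, your ``hence ${\bar P}^0$ is non-compact'' is a non sequitur: non-compactness of $N$ does not rule out ${\bar P}^0$ being compact or even trivial; the paper needs a separate argument (Lemma~\ref{P0notcompact}), using that $\Gamma_0$ is a lattice in $\R^q \times {\bar P}^0$ and that a lattice in $\R^q$ cannot be preserved by a strict similarity. The paper's actual route runs the other way: it starts from the facts that ${\bar P}^0$ is \emph{abelian} (Corollary~\ref{P0abelian}, inherited from the LCP/Kourganoff theory, which your proposal never uses at this stage) and non-compact, applies the elliptic/parabolic/hyperbolic trichotomy to elements of ${\bar P}^0$ itself, excludes hyperbolic elements (the stabilizer of a geodesic cannot act cocompactly) and the all-elliptic case (which would force ${\bar P}^0$ compact), and only then uses normality of ${\bar P}^0$ in $\bar P$ and uniqueness of the parabolic fixed point to propagate the fixed point to all of $\bar P$ (Lemma~\ref{FixedPointGeneral}). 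Excluding hyperbolic elements of ${\bar P}^0$ is also needed later, to know ${\bar P}^0$ lies in the horosphere stabilizer $\mathcal H$ --- a point your sketch skips.

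Second, the transitivity of ${\bar P}^0$ on horospheres, which you yourself flag as ``the main obstacle,'' is precisely where the paper's mathematical content lies, and your proposal supplies no mechanism for it: cocompactness of $\mathcal H$ on $H$ plus a dimension count does not rule out lower-dimensional orbits. The paper produces a hyperbolic element $p \in \bar P$ (it exists because $\bar P/\mathcal H$ acts nontrivially on the Busemann factor $\R$), takes the fixed boundary point to be its \emph{repulsive} endpoint, and proves by a Jacobi-field convexity estimate (Lemma~\ref{Contraction}) that the induced map $p_2$ on a horosphere strictly contracts the induced metric; conjugating a compact set $K$ with $\mathcal H \cdot K = H$ by $p_2$ then shrinks its diameter arbitrarily, yielding dense $\mathcal H$-orbits, and properness upgrades density to transitivity. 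Relatedly, your derivation of flatness of $H$ from Theorem~\ref{Foliation} is circular: the leaf closures are images of $\R^q \times ({\bar P}^0 \cdot x)$, and identifying the orbit ${\bar P}^0 \cdot x$ with a whole horosphere presupposes the very transitivity in question; the paper instead gets flatness from transitivity plus abelianness ($H$ is simply connected and homogeneous under an abelian isometry group), and pins down ${\bar P}^0$ as exactly the translations via the elementary lemma on connected abelian transitive subgroups of $\Isom(\R^m)$. For the homogeneous case, your appeal to Heintze's classification is a legitimate alternative to the paper's use of Chen's theorem together with the explicit one-parameter group $\varphi_s(t,a) = (t+s, e^{-sA}a)$ along the axis (which is what actually yields $S_t = e^{tA}$), but as written it rests on the two unproven middle steps above.
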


\subsection{Organization of the paper}

The present paper is divided into three parts. In Section~\ref{SectionQuotient} we establish equivalence between GIB and LCP manifolds by proving Theorem~\ref{Equivalence} and Theorem~\ref{Foliation}. In Section~\ref{sectionBieberbach} we investigate Bieberbach rigidity and we prove Theorem~\ref{BRigidity}. We finally turn to the case where $N$ has negative curvature in Section~\ref{Section3}, where we prove Theorem~\ref{Classification Negative}.

\section{Compact quotients of product manifolds by similarities} \label{SectionQuotient}

We recall that a similarity, or homothety, between two Riemannian manifolds $(M_1, g_1)$ and $(M_2, g_2)$ is a diffeomorphism $\phi : M_1 \to M_2$ such that
\begin{equation} \label{Sim}
\phi^*g_2 = \lambda^2 g_1 
\end{equation}
for some positive real number $\lambda$, called the ratio of the similarity. The set of similarities from a Riemannian manifold $(M, g)$ to itself is denoted by $\Sim(M, g)$. The set of isometries of $(M, g)$, i.e. the similarities with ratio $1$, is denoted by $\Isom(M, g)$. When no confusion is possible, we will omit the metric $g$ in these notations.

The identity connected component of a Lie group $G$ is denoted by $G^0$.

We consider the Riemannian product $\tilde M :=  \R^q \times (N, g_N)$ where $q \ge 1$, $(N, g_N)$ is a simply connected, complete Riemannian manifold. Let $\Gamma$ be a discrete subgroup of $\Sim(\R^q) \times \Isom(N, g_N)$ acting freely, properly and cocompactly on $\tilde M$. In particular, $M := \Gamma \backslash \tilde M$ is a GIB manifold in the sense of Definition~\ref{GIB def}. We denote by $P$ the projection of $\Gamma$ onto $\Isom(N)$, and by $\bar P$ and ${\bar P}^0$ respectively the closure of $P$ in $\Isom(N, g_N)$ with respect to the compact-open topology and its identity connected component.

We assume that the projection of $\Gamma$ onto $\Sim(\R^q)$ contains at least on strict similarity, i.e. a similarity with ratio different from $1$. Under this assumption, it is known that $P$ is isomorphic to $\pi_1(M)$; more precisely, the projection $\pi_1(M) \to P$ is injective, by the proof of \cite[Lemma 2.10]{FlaLCP}, which follows that of \cite[Lemma 4.17]{Kou}. We denote this isomorphism by $\varphi : P \to \pi_1(M)$. Let $\rho_0 : \Sim(\R^q) \to \R^*_+$ be the group homomorphism assigning to a similarity its ratio, and set $\rho := \rho_0 \circ \varphi$.

Our goal is to prove the following:

\begin{proposition} \label{mainProp}
There exists a smooth positive function $f : N \to \R$ which is $P$-equivariant, i.e. for any $p \in P$ one has $p^* f = \rho(p) f$.
\end{proposition}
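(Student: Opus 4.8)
The plan is to first build a $\Gamma$-equivariant positive function on the cover $\tilde M$ carrying the correct conformal weight, and only afterwards average out its dependence on the flat factor. For the first step I would choose an arbitrary Riemannian metric $g_M$ on the compact manifold $M$ and lift it to a $\Gamma$-invariant metric $\hat g$ on $\tilde M$. The product structure provides a $\Gamma$-invariant vertical distribution $\mathcal V := \ker(d\pi_N)$, tangent to the $\R^q$-fibres, on which we have two inner products: the flat one induced by $g_{\R^q}$ and $\hat g|_{\mathcal V}$. Since each $\gamma=(\gamma_1,p)\in\Gamma$ scales $g_{\R^q}$ by $\rho(p)^2$ while preserving $\hat g$, the function
\begin{equation*}
\tilde f(z) := \left(\frac{\det\big(g_{\R^q}|_{\mathcal V_z}\big)}{\det\big(\hat g|_{\mathcal V_z}\big)}\right)^{1/(2q)}
\end{equation*}
(the determinant ratio being basis-independent) is smooth and positive, and a direct computation gives $\tilde f\circ\gamma=\rho(p)\,\tilde f$ for every $\gamma=(\gamma_1,p)\in\Gamma$. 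This is the easy part: it already produces the exact automorphy, but $\tilde f$ a priori depends on the whole point of $\tilde M$, not just on its $N$-coordinate.

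The second step is to remove the dependence on the $\R^q$-factor by averaging along the fibres. The foliation $\mathcal F$ is Riemannian on the compact manifold $M$, so by Molino's theory its leaf closures are compact and carry a canonical invariant measure; their preimages in $\tilde M$ are the saturated sets $\R^q\times\overline{Px}$. The key point is that the weight $\rho$ is trivial along the directions over which one averages: an element $(\gamma_1,p)$ with $\rho(p)\ne1$ is a strict similarity of $\R^q$ and therefore has a unique fixed point there, so by freeness of the action no such element can lie in an isotropy group $\Gamma_x=\{\gamma : p_\gamma x=x\}$; hence $\rho|_{\Gamma_x}=1$, the fibrewise action of $\Gamma_x$ is by Euclidean isometries, and the flat density bundle attached to $\rho$ restricts trivially to each leaf closure. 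Averaging $\tilde f$ over the leaf closures with respect to the invariant measure — in the model case of a closed leaf, the mean value of $\tilde f(\cdot,x)$ over the flat torus $\Gamma_x\backslash\R^q$, normalised by its covolume — then yields a positive smooth function that is constant along leaf closures and still transforms by $\rho$. The normalisation is exactly what fixes the power: under $\gamma$ the similarity $\gamma_1$ scales the fibre covolume by $\rho(p)^q$ and scales $\tilde f$ by $\rho(p)$, so the normalised average scales by precisely $\rho(p)$.

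Finally, a function on $\tilde M$ that is constant in the fibre directions and transforms by $\rho$ under $\Gamma$ is exactly the pullback of a function $f:N\to\R$ with $p^*f=\rho(p)f$, which is the assertion; positivity and smoothness are inherited from $\tilde f$ and from the smooth dependence of the averaging measure on the transverse coordinate. I expect the main obstacle to be this second step, and specifically the verification that $\rho$ is genuinely trivial along all the averaging directions — equivalently, that $\rho$ extends to a continuous homomorphism $\bar\rho:\bar P\to\R^*_+$ with $\bar P^0\subset\ker\rho$, which is forced by the fact that $f$ must be single-valued and positive on the compact orbit closures $\overline{Px}$. This is precisely where freeness, properness and cocompactness of the $\Gamma$-action are used: freeness rules out strict similarities with fixed points, while properness and cocompactness guarantee, through the compactness of leaf closures, that the fibrewise average is finite, well defined, and smooth even where the leaf-closure combinatorics vary.
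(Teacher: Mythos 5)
Your first step is correct and is a genuinely different device from the paper's: the determinant-ratio function $\tilde f=\bigl(\det(g_{\R^q}\vert_{\mathcal V})/\det(\hat g\vert_{\mathcal V})\bigr)^{1/(2q)}$, with $\hat g$ lifted from an arbitrary metric on the compact quotient, is indeed smooth, positive, and satisfies $\tilde f\circ\gamma=\rho(\gamma)\tilde f$; the paper never constructs such a function and works directly on $N$. The gap is in your second step, and it sits exactly where the paper's Lemmas~\ref{tech1} and~\ref{tech2} do their work. Averaging over the leaf closures is legitimate only if the weight $\rho$ is trivial on the full holonomy of a closure, i.e.\ on the elements of $P$ lying in (or limiting into) ${\bar P}^0$ --- equivalently, only if $\rho$ extends continuously to $\bar P$ with ${\bar P}^0\subset\ker\tilde\rho$. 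Your freeness argument establishes $\rho\vert_{\Gamma_x}=1$ only for the exact stabilizers $\Gamma_x=\{\gamma:\,p_\gamma x=x\}$, and these are typically trivial: already for the $S^0$-Inoue surface of Example~\ref{S0 construction}, every $\Gamma_x$ is trivial, every leaf is a densely embedded copy of $\R^q$ inside a $3$-torus closure, and your ``model case of a closed leaf'' never occurs. What the averaging actually requires is control of the returns of a dense leaf to itself, i.e.\ of elements $p\in P$ arbitrarily close to $\mathrm{id}$ in ${\bar P}^0$ with possibly $\rho(p)\neq1$, and your only justification there --- that triviality of $\rho$ on ${\bar P}^0$ ``is forced by the fact that $f$ must be single-valued and positive on the compact orbit closures'' --- is circular, since it derives the needed lemma from the existence of the function you are constructing. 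The paper closes exactly this hole: compactness of $M$ gives a uniform $\delta>0$ such that every $p\in P$ with $\rho(p)\neq1$ displaces \emph{every} point of $N$ by at least $\delta$ (Lemma~\ref{tech1}, a quantitative strengthening of your freeness observation, using the transverse injectivity radius and the fixed point of the strict similarity in $\R^q$), and then an open--closed argument in the connected group ${\bar P}^0$ yields $\rho({\bar P}^0\cap P)=\{1\}$ (Lemma~\ref{tech2}), whence the continuous extension $\tilde\rho:\bar P\to\R^*_+$.

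Two further points. First, as literally stated your intermediate claim is self-contradictory: a positive function constant along leaf closures cannot also transform by a nontrivial $\rho$, since $px\in\overline{Px}$ would force $f(px)=f(x)=\rho(p)f(x)$; what your model computation actually produces is a function constant only along the $\R^q$-fibre directions, varying along $\bar P$-orbits by the weight --- and its well-definedness modulo the closure holonomy is again precisely the missing lemma. Second, once $\tilde\rho$ is available, your step one becomes unnecessary and the delicate issues you defer (smooth dependence of Molino's invariant measures on leaf closures of possibly varying structure) evaporate: the paper simply takes a bump function $f_0$ equal to $1$ on a compact set $K$ with $\bar P\cdot K=N$ and sets $f(x)=\int_{\bar P}\tilde\rho(p)^{-1}(p^*f_0)(x)\,d\mu(p)$, which is finite by properness of the $\bar P$-action, positive by cocompactness, smooth, and $\rho$-equivariant by invariance of the Haar measure. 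So the architecture of your proposal is reasonable, but the heart of the proposition is the extension of $\rho$ to $\bar P$, which your argument assumes rather than proves.
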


We begin with a technical lemma:

\begin{lemma} \label{tech1}
There exists $\delta > 0$ such that for any $p \in P$ with $\rho (p) \neq 1$ and for any $x \in N$, one has $d_N (p(x), x) \ge \delta$, where $d_N$ stands for the Riemannian distance on $N$. 
\end{lemma}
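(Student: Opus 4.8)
The plan is to argue by contradiction, combining the cocompactness of the action with proper discontinuity. The crucial geometric observation is that when $\rho(p) \neq 1$, the unique element $\gamma = (\gamma_1, p) \in \Gamma$ projecting to $p$ (the projection $\pi_1(M) \to P$ being injective under our standing assumption) has an $\R^q$-component $\gamma_1$ that is a strict similarity, hence a contraction after possibly replacing it by its inverse. By the Banach fixed point theorem on the complete space $\R^q$, such a $\gamma_1$ admits a unique fixed point $a \in \R^q$. Consequently $\gamma \cdot (a, x) = (\gamma_1(a), p(x)) = (a, p(x))$ for every $x \in N$, so that the displacement of the point $(a,x)$ is entirely transverse to the $\R^q$-factor: since the two points share the $\R^q$-coordinate $a$, the product metric gives $d_{\tilde M}((a,x), \gamma(a,x)) = d_N(x, p(x))$.

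First I would suppose the conclusion fails, producing a sequence $p_n \in P$ with $\rho(p_n) \neq 1$ and points $x_n \in N$ such that $d_N(x_n, p_n(x_n)) \to 0$. Let $\gamma_n = (\gamma_{1,n}, p_n) \in \Gamma$, let $a_n \in \R^q$ be the fixed point of $\gamma_{1,n}$, and set $z_n := (a_n, x_n)$, so that $d_{\tilde M}(z_n, \gamma_n z_n) = d_N(x_n, p_n(x_n)) \to 0$ by the computation above. Using cocompactness, I would fix a compact set $K \subset \tilde M$ with $\Gamma \cdot K = \tilde M$ and choose $h_n \in \Gamma$ with $y_n := h_n z_n \in K$. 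The key step is then to transport the small displacement into $K$ by conjugation: set $\gamma_n' := h_n \gamma_n h_n^{-1} \in \Gamma$, so that $\gamma_n' y_n = h_n \gamma_n z_n$.

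The main obstacle is that $h_n$ is only a similarity of $\tilde M$, not an isometry, so it a priori distorts the distance $d_{\tilde M}$ and the displacement of $y_n$ need not match that of $z_n$. The resolution is precisely the choice of basepoint at the fixed point $a_n$: the points $z_n = (a_n, x_n)$ and $\gamma_n z_n = (a_n, p_n(x_n))$ share the same $\R^q$-coordinate, and writing $h_n = (h_{1,n}, q_n)$ with $q_n \in \Isom(N)$, the scaling factor on $\R^q$ multiplies a vanishing $\R^q$-distance. Hence $d_{\tilde M}(y_n, \gamma_n' y_n) = d_N(q_n x_n, q_n p_n(x_n)) = d_N(x_n, p_n(x_n)) \to 0$, and the displacement is preserved despite the distortion on the flat factor.

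It then remains to reach a contradiction by a standard argument. Both $y_n$ and $\gamma_n' y_n$ lie in $K$ for large $n$ and, after passing to a subsequence, converge to a common limit $y \in K$. Proper discontinuity of the $\Gamma$-action forces the set $\{\gamma_n'\}$ to be finite, so along a further subsequence $\gamma_n' = \gamma'$ is constant; continuity of the action yields $\gamma' y = y$, and freeness gives $\gamma' = e$. On the other hand, the $\Sim(\R^q)$-component of $\gamma_n'$ is $h_{1,n}\gamma_{1,n}h_{1,n}^{-1}$, and since $\rho_0$ is a homomorphism into the abelian group $\R^*_+$ it is conjugation-invariant, whence $\rho_0$ of this component equals $\rho_0(\gamma_{1,n}) = \rho(p_n) \neq 1$; thus $\gamma' \neq e$, a contradiction. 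This establishes the existence of the uniform $\delta > 0$.
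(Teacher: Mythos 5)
Your proof is correct, and it shares with the paper the one indispensable idea: basing the argument at the unique fixed point $a$ of the strict similarity component, so that the displacement of $(a,x)$ under $(\phi, p)$ is purely in the $N$-direction and, crucially, is insensitive to conjugation by elements of $\Gamma$ whose $\R^q$-parts are only similarities. Where you diverge is in how the uniform $\delta$ is extracted. The paper argues directly, with no sequences and no contradiction: since $\Gamma$ acts by isometries on the $N$-factor, the leafwise Riemannian exponential descends to a map $\Xi : T\G \to M$ along the foliation $\G$ induced by that factor, and compactness of $M$ (a finite cover by product charts $B \times V$) yields a uniform radius $\delta$ on which $\Xi$ is fiberwise injective; injectivity of the projection $\tilde M \to M$ on $\{a\} \times B_N(x,\delta)$ then gives $d_N(x, p(x)) \ge \delta$ at once. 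You instead run a compactness-by-contradiction argument: transport a sequence of vanishing displacements into a fixed compact set via cocompactness and conjugation, then conclude by proper discontinuity, freeness, and the conjugation-invariance of the ratio homomorphism $\rho_0$ (valued in the abelian group $\R^*_+$). Your route is softer --- it uses only properness, cocompactness and freeness, with no foliation machinery --- whereas the paper's is constructive, producing $\delta$ as an explicit transverse injectivity radius and introducing the map $\Xi$, which encodes the transverse Riemannian structure exploited elsewhere in the paper. Two cosmetic points in your write-up: $\gamma_n' y_n$ need not lie in $K$ itself, only in a fixed compact neighbourhood of $K$ (which suffices, since $d_{\tilde M}(y_n, \gamma_n' y_n) \to 0$ and proper discontinuity applies to any compact set); and uniqueness of the lift $\gamma_n$ of $p_n$ is never needed --- any choice of lift works, so the injectivity of $\pi_1(M) \to P$ is not actually used.
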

\begin{proof}
The group $\Gamma$ preserves the product decomposition $\tilde M \simeq \R^q \times N$. Thus, the two canonical transverse foliations given by the product structure induce transverse foliations $\F$ and $\G$ on $M$. In addition, the group $\Gamma$ projects to isometries on the second factor $N$. Thus, the Riemannian exponential of $N$ descends to a map $\Xi : T \G \to M$, and the Riemannian metric $g_N$ descends to a Riemannian bundle metric on $T \G \to M$. By compactness, there exists $\delta > 0$ such that for any $y \in M$, the map $\Xi$ is injective on the open ball of radius $\delta$ of $T_y \G$ (it suffices to find a finite covering of $M$ by open subsets which are the projection of subsets of the form $B \times V$ with $B$ a ball of $\R^q$ and $V$ a small ball of $N$).

Now, let $p \in P$ with $\rho(p) \neq 1$. The associated map $\varphi(p)$ can be written as $(\phi, p) \in \Sim(\R^q) \times \Isom(N)$ and $\phi$ has a unique fixed point $a \in \R^q$. Let $x \in N$ and let $B_N(x, \delta)$ be the image of the open ball of radius $\delta$ in $T_x N$ by the Riemannian exponential map of $N$. We have $(\phi, p) (a,x) = (a,p(x))$. But the previous discussion implies that the restriction of the projection $\tilde M \to M$ to $\{a\} \times B_N(x, \delta)$ is injective. Consequently, $(a, p(x)) \notin \{a\} \times B_N(x, \delta)$ i.e. $p(x) \notin B_N(x, \delta)$ and $d_N(x, p(x)) \ge \delta$.
\end{proof}

From the previous technical lemma, we would like to deduce that the group homomorphism $\rho$ extends continuously to a group homomorphism $\bar P \to \R_+^*$. To achieve this, it suffices to establish the following result:

\begin{lemma} \label{tech2}
For any $p \in {\bar P}^0 \cap P$, one has $\rho(p) = 1$.
\end{lemma}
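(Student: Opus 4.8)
The plan is to show that $\rho$ already vanishes on a neighbourhood of the identity inside $\bar P$, and then to propagate this triviality to all of ${\bar P}^0 \cap P$ using connectedness together with the density of $P$ in $\bar P$. The point is that the statement is really about transporting information from near the identity out to an arbitrary element of the connected component.

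First I would fix a base point $x_0 \in N$ and set $S := \{ g \in \bar P : d_N(g(x_0), x_0) < \delta \}$, where $\delta$ is the constant from Lemma~\ref{tech1}. Since $g \mapsto d_N(g(x_0), x_0)$ is continuous for the compact-open topology, $S$ is an open neighbourhood of the identity in $\bar P$. The contrapositive of Lemma~\ref{tech1}, applied at $x = x_0$, then shows at once that $\rho \equiv 1$ on $P \cap S$: if some $p \in P \cap S$ had $\rho(p) \neq 1$, then $d_N(p(x_0), x_0) \ge \delta$, contradicting $p \in S$. Next I would record two structural facts. By Myers--Steenrod, $\Isom(N)$ is a Lie group, so its closed subgroup $\bar P$ is a Lie group and ${\bar P}^0$ is open in $\bar P$; consequently $\Lambda := P \cap {\bar P}^0$ is dense in ${\bar P}^0$, since any nonempty open subset of ${\bar P}^0$ is open in $\bar P$ and hence meets the dense subgroup $P$. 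Moreover, being a connected Lie group, ${\bar P}^0$ is generated by any neighbourhood of its identity.

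The core step is to deduce $\rho(p) = 1$ for an arbitrary $p \in \Lambda$. Choose a symmetric open neighbourhood $U$ of the identity with $U \subseteq S$; by connectedness of ${\bar P}^0$ I may write $p = h_1 \cdots h_m$ with each $h_i \in U \subseteq S$ inside ${\bar P}^0$. Because multiplication is continuous and $p^{-1} h_1 \cdots h_m = e \in S$, I can pick open neighbourhoods $U_i \ni h_i$ contained in $S$ so small that $p^{-1} U_1 \cdots U_m \subseteq S$. Using density of $\Lambda$, I then select $g_i \in \Lambda \cap U_i$; each $g_i$ lies in $P \cap S$, so $\rho(g_i) = 1$, while $g := g_1 \cdots g_m \in P$ satisfies $p^{-1} g \in S$. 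Since $p^{-1} g$ again lies in $\Lambda$, it belongs to $P \cap S$, so $\rho(p^{-1} g) = 1$. On the other hand $\rho(p^{-1} g) = \rho(p)^{-1}\rho(g_1)\cdots\rho(g_m) = \rho(p)^{-1}$, whence $\rho(p) = 1$, as desired.

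The hard part is precisely this last propagation. The naive approach—observing that if $\langle p \rangle$ is non-discrete then the powers $p^n$ accumulate at the identity, so that Lemma~\ref{tech1} forbids $\rho(p)^n \neq 1$—breaks down exactly when $\langle p \rangle$ is infinite and discrete in ${\bar P}^0$ (as already happens for $\Z \subset \R$), and this case cannot be excluded by group-theoretic means alone. The device above circumvents the difficulty: rather than moving $p$ toward the identity through its own powers, I factor $p$ into elements of ${\bar P}^0$ lying near the identity, approximate each factor by a nearby element of $P$ (on which $\rho$ is already known to be trivial), and keep careful enough control of the residual term $p^{-1} g$ that it, too, lands in $P \cap S$. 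I would expect the only delicate points to be verifying that $\Lambda$ is genuinely dense in ${\bar P}^0$ and that the approximations can be chosen simultaneously small enough for $p^{-1} g \in S$; both follow from openness of ${\bar P}^0$ in $\bar P$ and joint continuity of multiplication.
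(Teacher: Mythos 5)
Your proof is correct. Both arguments start from the same seed extracted from Lemma~\ref{tech1} --- that $\rho \equiv 1$ on $P$ near the identity (your $P \cap S$, the paper's $U \cap P$) --- and both exploit connectedness of ${\bar P}^0$ and density of $P$, but the propagation mechanism differs. The paper upgrades local triviality to local constancy, $\rho((p\cdot U)\cap P) = \{\rho(p)\}$ for all $p \in P$, and then runs a clopen argument: the set $E \subseteq {\bar P}^0$ of points admitting a neighbourhood $V$ with $\rho(V\cap P)=\{1\}$ is shown to be nonempty, open and closed, hence all of ${\bar P}^0$. You instead use that a connected group is generated by any symmetric identity neighbourhood: you factor $p = h_1\cdots h_m$ with $h_i \in U\cap{\bar P}^0 \subseteq S$, approximate each $h_i$ by $g_i \in P\cap U_i$ (so $\rho(g_i)=1$), shrink the $U_i$ jointly so the residual $p^{-1}g_1\cdots g_m$ lands in $S$ (hence also has $\rho = 1$), and conclude by multiplicativity of $\rho$. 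Each step checks out: $S$ is open since evaluation at $x_0$ is continuous in the compact-open topology, Lemma~\ref{tech1} applies at the single point $x_0$, and the simultaneous shrinking is legitimate by joint continuity of multiplication. Your route buys two things: it never needs the auxiliary set $E$, whose closedness is the fiddly step in the paper (where, incidentally, the displayed identity $\rho((p\cdot U)\cap{\bar P}^0)=\{\rho(p)\}$ should read $\rho((p\cdot U)\cap P)=\{\rho(p)\}$, as $\rho$ is only defined on $P$ at that stage), and it makes explicit the Lie-theoretic input --- Myers--Steenrod plus Cartan's closed subgroup theorem, giving that ${\bar P}^0$ is open in $\bar P$ and hence that $P\cap{\bar P}^0$ is dense in ${\bar P}^0$ --- which the paper uses tacitly when asserting that $P$ is dense in ${\bar P}^0$. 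In exchange, the paper's local-constancy formulation avoids choosing any factorization and feeds directly into the subsequent corollary extending $\rho$ to a continuous homomorphism $\tilde\rho$ on $\bar P$. Two cosmetic remarks: your final observation that $p^{-1}g \in \Lambda$ is superfluous, since $p^{-1}g \in P\cap S$ is all you use; and your aside about powers of $p$ correctly diagnoses why the naive argument fails when $\langle p\rangle$ is discrete.
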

\begin{proof}
By Lemma~\ref{tech1}, there exists an open neighbourhood $U$ of $\mathrm{id}$ in $\Isom(N)$ such that $\rho(U \cap P) = \{1\}$. By replacing $U$ with $U \cap U^{-1}$ if necessary, we can assume that $U$ is symmetric, i.e. $U = U^{-1}$. It follows that for any $p \in P$, $\rho((p \cdot U) \cap P) = \{\rho(p)\}$.

We define $E$ as the set of all $p_0 \in {\bar P}^0$ such that there is a neighbourhood $V$ of $p_0$ in ${\bar P}^0$ satisfying $\rho(V \cap P) = \{1\}$. The set $E$ is open by definition and non-empty because $\mathrm{id} \in E$. We claim that $E$ is also closed. Indeed, if $p_0 \in {\bar P}^0$ is not in $E$, then there exists $p \in p_0 \cdot U$ such that $\rho(p) \neq 1$, and $p_0 \in p \cdot U$ because $U$ is symmetric. But $\rho((p \cdot U) \cap {\bar P}^0) = \{\rho (p)\}$, which means that for any $p_0' \in p\cdot U$ and any neighbourhood $V$ of $p_0'$, $P \cap V \cap (p\cdot U) \neq \emptyset$ because $P$ is dense in ${\bar P}^0$, and for any $p' \in P \cap V \cap (p\cdot U)$, $\rho(p') = \rho(p) \neq 1$, so $p_0'$ is not in $E$.

Since the set $E$ is non-empty, open and closed in the connected set ${\bar P}^0$, we conclude that $E = {\bar P}^0$. The lemma follows.
\end{proof}

\begin{corollary}
The group homomorphism $\rho : P \to \R^*_+$ extends uniquely to a continuous group homomorphism $\tilde \rho : \bar P \to \R^*_+$.
\end{corollary}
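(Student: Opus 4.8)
The plan is to exploit the fact that $\bar P$, being a closed subgroup of the Lie group $\Isom(N, g_N)$, is itself a Lie group, so its identity component ${\bar P}^0$ is open and the group of connected components $\bar P / {\bar P}^0$ is discrete. The strategy is to push $\rho$ down to this discrete quotient, where continuity becomes automatic, and then pull it back to all of $\bar P$.

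First I would record that the restriction of the quotient projection $\pi : \bar P \to \bar P/{\bar P}^0$ to $P$ is surjective: since each connected component is an open subset of $\bar P$ and $P$ is dense in $\bar P$, the subgroup $P$ meets every component. Its kernel is exactly $P \cap {\bar P}^0$. By Lemma~\ref{tech2}, $\rho$ vanishes on $P \cap {\bar P}^0$, so $\rho$ factors through $\pi|_P$; that is, there is a unique homomorphism $\bar\rho : \bar P/{\bar P}^0 \to \R^*_+$ with $\bar\rho \circ \pi|_P = \rho$. The point here is that, thanks to the surjectivity just noted, the induced isomorphism $P/(P \cap {\bar P}^0) \xrightarrow{\sim} \bar P/{\bar P}^0$ identifies the quotient on which $\rho$ descends with the whole component group, so that $\bar\rho$ is genuinely defined on all of $\bar P/{\bar P}^0$.

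I would then set $\tilde\rho := \bar\rho \circ \pi : \bar P \to \R^*_+$. This is a group homomorphism as a composite of homomorphisms, and it is continuous because $\bar P/{\bar P}^0$ is discrete, so that every map out of it is continuous while $\pi$ is continuous as a quotient map. By construction $\tilde\rho$ restricts to $\rho$ on $P$. Finally, uniqueness follows from the density of $P$ in $\bar P$ together with the Hausdorff property of $\R^*_+$: two continuous extensions agreeing on a dense subset must coincide.

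The conceptual heart of the argument is already contained in Lemma~\ref{tech2}, which is precisely what makes the descent to the component group possible; without the vanishing of $\rho$ on $P \cap {\bar P}^0$ there would be no well-defined factorization, and this is where all the geometry (through Lemma~\ref{tech1}) is used. The only mildly delicate point to check carefully is the surjectivity of $\pi|_P$, ensuring that $\bar\rho$ lives on the entire component group rather than merely on the image of $P$; this is an immediate consequence of the density of $P$ combined with the openness of the components.
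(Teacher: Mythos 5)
Your proof is correct and takes essentially the same route as the paper, whose entire proof of this corollary is the single line ``direct consequence of Lemma~\ref{tech2}'': the intended argument is exactly your factorization, namely that Lemma~\ref{tech2} makes $\rho$ trivial on $P \cap {\bar P}^0$, hence constant on the trace of $P$ in each (open) ${\bar P}^0$-coset, so that it descends to the discrete component group $\bar P / {\bar P}^0$ and pulls back to a continuous homomorphism on $\bar P$, with uniqueness from density of $P$ and the Hausdorff property of $\R^*_+$. Your write-up simply makes explicit the points the paper leaves implicit (openness of ${\bar P}^0$ via the Lie group structure of the closed subgroup $\bar P \leqslant \Isom(N,g_N)$, surjectivity of $P \to \bar P/{\bar P}^0$ by density), and all of these verifications are sound.
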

\begin{proof}
This is a direct consequence of Lemma~\ref{tech2}.
\end{proof}

We can now prove Proposition~\ref{mainProp}:

\begin{proof}[Proof of Proposition~\ref{mainProp}]
By assumption, the group $P$ acts cocompactly on $N$. Let $K$ be a compact subset of $N$ such that $\bar P \cdot K = N$ and let $f_0$ be a non-negative function with compact support such that $f_0 \vert_K = 1$. Let $\mu$ be the Haar-measure of $\bar P$ and we define for any $x \in N$:
\begin{equation}
f(x) := \int_{\bar P} \tilde \rho(p)^{-1} (p^* f_0)(x) d \mu(p).
\end{equation}
The function $f$ is well-defined because $\bar P$ acts properly on $N$ and it is positive because for any $x \in N$ there exists $p \in \bar P$ such that $p (x) \in K$, so $(p^* f_0) (x) = 1$. Moreover, the function is smooth by construction. It remains to prove that it is equivariant. One has for any $p' \in P$:
\begin{align*}
(p'^*f) (x) &= p'^* \int_{\bar P} \tilde \rho(p)^{-1} (p^* f_0)(x) d \mu(p) = \int_{\bar P} \tilde \rho(p)^{-1} (p'^*p^* f_0)(x) d \mu(p) \\
&= \int_{\bar P} \tilde \rho(p p')^{-1} \tilde \rho(p') ((p p')^* f_0)(x) d \mu(p) \\
&= \tilde \rho(p') \int_{\bar P} \tilde \rho(p)^{-1} \tilde \rho(p') (p^* f_0)(x) d \mu(p) \\
&= \rho(p') f(x). \qedhere
\end{align*}
\end{proof}

\begin{corollary} \label{corMetric}
There exists a metric $g_N'$ on $N$ such that $\Gamma \subset \Sim(\tilde M) \cap(\Sim(\R^q) \times \Sim(N, g'_N))$.
\end{corollary}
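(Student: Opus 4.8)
The plan is to use the equivariant function $f : N \to \R$ produced by Proposition~\ref{mainProp} to rescale the metric $g_N$ on the second factor. Set $g_N' := f^2 g_N$ (or equivalently $e^{2\log f} g_N$, which is smooth and positive since $f > 0$). I claim this is the desired metric. First I would verify that elements of $P$, which are isometries of $(N, g_N)$, become similarities of $(N, g_N')$ with the correct ratios. Indeed, for $p \in P$ one computes
\begin{equation*}
p^* g_N' = p^*(f^2 g_N) = (p^* f)^2 (p^* g_N) = (\rho(p) f)^2 g_N = \rho(p)^2 (f^2 g_N) = \rho(p)^2 g_N',
\end{equation*}
using the equivariance $p^* f = \rho(p) f$ from Proposition~\ref{mainProp} and the fact that $p$ is a $g_N$-isometry, so $p^* g_N = g_N$. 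Hence each $p \in P$ is a similarity of $(N, g_N')$ of ratio $\rho(p)$.

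Next I would assemble this into a statement about $\Gamma$ acting on $\tilde M$. Recall that via the isomorphism $\varphi : P \to \pi_1(M)$, each $\gamma = \varphi(p) \in \Gamma$ decomposes as $(\phi, p) \in \Sim(\R^q) \times \Isom(N, g_N)$, where $\phi$ is a similarity of $\R^q$ of ratio $\rho_0(\phi) = \rho(p)$ by the very definition $\rho = \rho_0 \circ \varphi$. With the new metric on the $N$-factor, $p$ is now a $g_N'$-similarity of the same ratio $\rho(p)$, so the two factors of $\gamma$ scale the respective metrics $g_{\R^q}$ and $g_N'$ by the common factor $\rho(p)^2$. Therefore $\gamma$ is a similarity of the Riemannian product $\R^q \times (N, g_N')$ of ratio $\rho(p)$, which shows $\Gamma \subset \Sim(\tilde M)$ for this rescaled product metric; and since each factor is itself scaled, $\gamma \in \Sim(\R^q) \times \Sim(N, g_N')$, giving $\Gamma \subset \Sim(\tilde M) \cap (\Sim(\R^q) \times \Sim(N, g_N'))$ as required.

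The argument is essentially a direct computation once Proposition~\ref{mainProp} is in hand, so there is no serious obstacle; the only point deserving care is the bookkeeping of ratios. Specifically, I must be sure that the ratio $\rho(p)$ by which $p$ scales $g_N'$ matches the ratio by which the $\R^q$-component $\phi$ scales the Euclidean metric, which is exactly the content of the definitions $\rho = \rho_0 \circ \varphi$ and the decomposition $\varphi(p) = (\phi, p)$. Since the two factors acquire the same scaling constant $\rho(p)^2$, the product map is a genuine similarity of $\tilde M$ rather than merely a map scaling each factor by an unrelated amount; this compatibility is what makes the product metric transform homogeneously and is the crux of the statement.
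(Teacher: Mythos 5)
Your proposal is correct and follows exactly the paper's own route: the paper likewise defines $g_N' := f^2 g_N$ with $f$ the equivariant function from Proposition~\ref{mainProp} and states that the required property is easily verified, which is precisely the ratio computation $p^* g_N' = \rho(p)^2 g_N'$ you carry out. Your explicit bookkeeping that the $\R^q$-factor and the rescaled $N$-factor are both scaled by the common ratio $\rho(p)$, so that $\gamma = (\phi, p)$ is a genuine similarity of the product, is the intended (omitted) verification and contains no gap.
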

\begin{proof}
Taking $f$ to be the function given by Proposition~\ref{mainProp}, we define $g_N' := f^2 g_N$, and we easily verify that this metric has the desired property.
\end{proof}

This last corollary means that we can equivalently see $\Gamma$ as a subgroup of similarities of $(\tilde M, g_{\R^q} + g_N')$ preserving the decomposition $\R^q \times N$, acting freely, properly discontinuously and cocompactly on $\tilde M$ and containing a non-isometric similarity. The two settings are actually equivalent, since in the latter case, there exists an equivariant function on $N$ as shown in \cite[Proposition 3.6]{FlaLCP} and in a simpler way in  \cite{MP24}. Note that in these papers it is assumed that $N$ is an irreducible incomplete Riemannian manifold. However, the irreducibility is not relevant for the results we use here. In particular, the analysis done in \cite{Kou} applies and we have:

\begin{corollary} \label{P0abelian}
The group ${\bar P}^0$ is abelian.
\end{corollary}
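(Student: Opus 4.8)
The plan is to work with the rescaled metric $g'_N$ furnished by Corollary~\ref{corMetric}, with respect to which $\bar P$ acts by genuine similarities of $(N, g'_N)$, the ratio of $p \in P$ being exactly $\rho(p)$. First I would pin down that the connected group $\bar P^0$ consists of honest \emph{isometries} of $g'_N$. Indeed, the ratio homomorphism $\tilde\rho \colon \bar P \to \R^*_+$ is continuous, and by Lemma~\ref{tech2} it is trivial on $P \cap \bar P^0$; since $\bar P^0$ is open in $\bar P$ and $P$ is dense in $\bar P$, the subgroup $P \cap \bar P^0$ is dense in $\bar P^0$, so $\tilde\rho \equiv 1$ on $\bar P^0$ by continuity. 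Hence $\bar P^0 \subset \Isom(N, g'_N)$.

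The crucial input is a contracting conjugation. By hypothesis there is $s \in P \subset \bar P$ with $\lambda := \tilde\rho(s) \neq 1$; replacing $s$ by $s^{-1}$ I may assume $\lambda < 1$. As $\bar P^0$ is the identity component it is normal in $\bar P$, so $c_s \colon h \mapsto s h s^{-1}$ is an automorphism of $\bar P^0$. Because $s$ scales $g'_N$-distances by $\lambda$, the displacement function of $s h s^{-1}$ equals $\lambda$ times that of $h$ precomposed with $s^{-1}$; equivalently, $c_s$ multiplies the pointwise norm of every Killing field in $\p := \Lie(\bar P^0)$ by $\lambda$ (up to the spatial shift by $s^{-1}$). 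Iterating, the displacements of $c_s^{\,n}(h)$ tend to $0$ uniformly on compact sets, so $c_s^{\,n}(h) \to \mathrm{id}$ and $c_s$ is a \emph{contracting} automorphism of $\bar P^0$.

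I would then conclude commutativity through the flat model of the similarity structure. A connected Lie group admitting a contracting automorphism is nilpotent, which already restricts the picture severely; to reach the stronger abelian conclusion I would pass to the Euclidean representation of the transversal similarity structure, exactly as in the analysis of Kourganoff \cite{Kou}. There $\bar P$ is represented in $\Sim(\R^n)$ (or, using the matching ratios, one argues on the flat $\R^q$-side where $s$ becomes a Euclidean contraction), $\bar P^0$ maps into $\Isom(\R^n)$, and conjugation by the contraction $s$ keeps the rotational part of an isometry in a bounded set while contracting its translational part. Since $c_s$ is contracting, the rotational parts must vanish, so the image of $\bar P^0$ consists of translations and is abelian; as the representation is faithful on $\bar P^0$, the group $\bar P^0$ is itself abelian.

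I expect the genuine difficulty to lie in this last step, namely upgrading ``nilpotent'' to ``abelian''. The contracting-automorphism argument is soft and yields nilpotency almost for free, but eliminating the rotational (compact) directions requires the precise flat model of the transversal similarity structure developed in \cite{Kou}, together with the faithfulness of the associated holonomy representation on $\bar P^0$. The point needing care is to check, as asserted in the remark preceding the statement, that this model applies verbatim once the irreducibility of $N$ is dropped.
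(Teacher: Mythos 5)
Your proposal takes essentially the same route as the paper, whose entire proof is the reduction you perform first --- rescaling the metric via Proposition~\ref{mainProp} and Corollary~\ref{corMetric} so that $\Gamma$ acts by similarities of $\R^q \times (N, g_N')$, with the observation that irreducibility of $N$ is not needed --- followed by a direct citation of \cite[Lemma 4.1]{Kou}. Your extra sketch of the contracting-conjugation argument is not required (and, as you yourself flag, its uniform-contraction and flat-model steps would need the full analysis of \cite{Kou} to be watertight for non-flat $N$), but since you ultimately defer to Kourganoff's lemma exactly as the paper does, the argument is correct.
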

\begin{proof}
This is a consequence of the previous discussion and of \cite[Lemma 4.1]{Kou}.
\end{proof}

The combination of Corollary~\ref{corMetric} and Corollary~\ref{P0abelian} implies Theorem~\ref{Equivalence}.

We now consider the foliation induced by the submersion $\tilde M \simeq \R^q \times N \to N$. Noticing that $\Gamma$ preserves the decomposition $\R^q \times N$, this foliation descends to a foliation $\F$ on $M = \Gamma \backslash \tilde M$. Applying \cite[Theorem 1.10]{Kou}, the closure of the leaves of $\F$ are finitely covered by flat tori. More precisely, by \cite[Lemma 4.18]{Kou} the subgroup $\Gamma_0$ of $\Gamma$ defined by $\Gamma_0 := \Gamma \cap (\Sim(\R^q) \times {\bar P}^0)$ is abelian and it is a lattice in $\R^q \times {\bar P}^0$. This proves the first part of Theorem~ref{Foliation}. The second part of the theorem comes from classical foliation theory: passing to the orthonormal frame bundle, the group ${\bar P}^0$ acts freely, thus making the foliation a fibration.

\section{Bieberbach rigidity} \label{sectionBieberbach}

\subsection{Bieberbach rigidity for homogeneous spaces} Bieberbach rigidity is a property of certain homogeneous spaces that greatly facilitates the search for discrete cocompact groups of diffeomorphisms.

We work within the following general framework. Let $X = G/H $ be a homogeneous space, and consider a discrete subgroup $ \Gamma \subset G$ acting (on the left) properly and cocompactly on $X$. This means that the quotient $\Gamma \backslash X$ is a compact Hausdorff space.

The situation is well understood when $X$ is of Riemannian type, meaning that the isotropy subgroup $H$ is compact. Equivalently, in this case, the full group $G$ acts properly on $X$. Then, the condition on $\Gamma $ reduces to requiring that $\Gamma$ is a cocompact lattice in $G$.

Our focus, however, is on non-Riemannian homogeneous spaces, i.e., those with non-compact isotropy $H$. In this setting, $\Gamma$ cannot be a cocompact lattice in $G$; otherwise, $G$ itself would act properly on $X$, which would contradict the fact that $X$ is non-Riemannian. Thus, $\Gamma$ must be thinner than a lattice, yet large enough to produce a compact quotient.

A particularly interesting situation arises when every such discrete group $\Gamma$ is contained in a connected closed subgroup $L \subset G$ acting properly on $X$, so that $\Gamma$ is in fact a lattice in   $ L$. One may view $L$ as a connected hull of $\Gamma$, reminiscent in many cases of its Zariski closure. The guiding philosophy is that studying connected Lie subgroups $L \subset G$ acting properly and cocompactly - a problem of linear-algebraic nature - is generally far more tractable than analyzing discrete subgroups $\Gamma$ directly - which has a number-theoretic flavor.

We define Bieberbach rigidity following the terminology of Fried and Goldman \cite{FriGol}:

\begin{definition} The homogeneous space  $X = G/H$ satisfies Bieberbach rigidity if every discrete subgroup $\Gamma$ of $G$ acting properly and cocompactly on $ X$ is contained in a connected closed subgroup $L\subset G$ acting properly on $ X$. 

\end{definition}

This form of rigidity was established in  \cite{Gol-Kam} for  flat Lorentz manifolds, i.e. in the case of the homogeneous space $\Mink^{1, n} = \big(\SO(1, n) \ltimes \R^{1+n} \big)/\SO(1, n)$. Thus $\Mink^{1, n} = G/H$ is the Minkowski space of dimension $1+n$, where $G$ is the Poincar\'e group and  $H$ is the Lorentz group. The Auslander conjecture itself  asks for Bieberbach rigidity for compact  unimodular affine manifolds \cite{Aus}. This was generalized more recently in \cite{HKMZ} for Lorentz manifolds modeled on plane waves, which may be viewed as perturbations of proper plane waves. Some related spaces, such as higher-dimensional anti de Sitter spaces $\mathrm{AdS}^{n+1} = \mathrm{SO}(2,n)/\mathrm{SO}(1,n)$, for $n > 2$, are also conjectured (at least in part) to  satisfy Bieberbach rigidity \cite{Zeg}. 
 
On the other hand, there exist many homogeneous spaces where Bieberbach rigidity fails, like the $3$-dimensional anti de Sitter space \cite{Gol, Salein, D-G-K}.

 \subsection{Classical Inoue--Bombieri construction} In this section, we describe $S^0$-Inoue--Bombieri surfaces in a way that illustrates how Bieberbach rigidity can be applied. We first recall the usual description of these manifolds:

\begin{example}[$S^0$-Inoue--Bombieri surfaces] \label{S0 construction}
 Let $A$ be a real $3 \times 3$ matrix in $\GL_3(\Z)$ with two complex conjugate eigenvalues $\alpha$ and $\bar \alpha$ and one real eigenvalue $\lambda > 1$ (take for example the companion matrix of the polynomial $X^3 - X^2 + 3 X -1$). We consider the manifold $\tilde M := \R^3 \times \R^*_+$ on which the group
\begin{equation}
\Gamma := \Z^3 \rtimes \langle \R^3 \times \R^*_+ \ni (X, x) \mapsto (A X, \lambda x) \rangle
\end{equation}
acts, where $\Z^3$ is the canonical lattice in $\R^3$, and $\langle F \rangle$ denotes the group generated by a family $F$. The manifold $M := \Gamma \backslash \tilde M$ is an Inoue--Bombieri surface of type $S^0$. The vector space $\R^3$ decomposes as the direct sum
\[
\ker (A-\alpha)(A-\bar\alpha) \oplus \ker (A-\lambda) \simeq \R^2 \oplus \R,
\]
and there exists a scalar product $b$ on $\R^2$ such that the restriction of $A$ to $\R^2$ is a $b$-similarity of ratio $\vert \alpha \vert = \lambda^{-1/2}$. We denote by $(x, y)$ the coordinates of $\ker (A-\lambda) \times \R^*_+ \simeq \R \times \R^*_+$ and we endow $\tilde M$ with the metric $h = b + y^{-2}(dx^2 + dy^2)$. One has $(\tilde M, h) \simeq (\R^2, b) \times (\R \times \R^*_+, y^{-2}(dx^2 + dy^2)) \simeq \C \times \H^2$, and $\Gamma$ is a subgroup of $\Sim(\C) \times \Isom(\H^2)$.
\end{example}

Surfaces of type $S^0$ arise as quotients of $\C \times \H^2$ by discrete subgroups $\Gamma$ of $ \bf G= \Aut(\C) \times \Aut(\H^2)$ which is notably smaller than the full group $\Aut(\C \times \H^2)$. Here, $\Aut$ denotes the group of holomorphic diffeomorphisms.  Identifying $\C$ with $\R^2$, one has
\[
\Aut(\C) = \Sim^+(\R^2),
\]
where $\Sim^+$ refers to the group of orientation preserving similarities. When $\Gamma \subset \Isom^+(\R^2)\times \Isom(\H^2)$, we are in the purely Riemannian setting. In the complementary case, the projection of $\Gamma$ onto $\Sim^+(\R^2)$ contains essential similarities - i.e. that are not isometries. In this latter case, the resulting quotient is clearly a GIB manifold in the sense of Definition~\ref{GIB def}.

In order to describe such  a group $\Gamma$, one first observes that its projection onto $\Isom(\H^2)$ is not discrete and must be contained in a parabolic subgroup of $\Isom(\H^2)$. Such a group is isomorphic to $\Aff(\R)$ and, up to conjugacy, acts on the upper half plane by $w \mapsto aw + b$, $a \in \R^+, b \in \R$, so that $\Aff(\R) = \R \ltimes \R$. Analogously, $\Aut(\C) \cong \Sim^+(\R^2)$ is a semidirect product $(\SO(2) \times \R) \ltimes \R^2$.  Thus, $\Gamma$ is a subgroup of
\[
G = \big((\SO(2) \times \R) \ltimes \R^2\big) \times (\R \ltimes \R) =  \SO(2) \ltimes   (A \ltimes \R^3),
\] where $A$ is a copy of $\R^2$, acting on $\R^3$ by diagonal matrices $\mathrm{diag} {(e^t, e^t, e^s)}$. 

Assuming Bieberbach rigidity holds, $\Gamma$ should be a lattice in some connected subgroup  $L \subset G$ acting properly on $\C \times \H^2$ (but not isometrically for the product metric). In particular, $L$ must be unimodular. Disregarding the compact factor for the moment, the unique  connected unimodular subgroup of $A \times \R^2$ acting properly and cocompactly is 4-dimensional and  has the form $D \ltimes \R^3$, where $D$ is the line in $A \cong \R^2$ defined by $2t + s = 0$, ensuring that $\mathrm{diag}(e^t, e^t, e^s)$ has determinant $1$.  Consequently,  the largest  subgroup  acting properly cocompactly on $\C \times \H^2$ is the 5-dimensional group ${\bf L} = (\SO(2) \times D) \ltimes \R^3$.   Therefore, finding $\Gamma$ reduces to showing that $\bf L$ indeed possesses lattices, and  
to classify them. It turns out  this is a standard task in the present setting.

This illustrates how Bieberbach rigidity allows one to avoid the delicate dynamical question of properness of the $\Gamma$-action. We believe that this viewpoint of the Inoue--Bombieri examples offers a more conceptual perspective than the one usually given in the literature, which typically emphasizes the explicit arithmetic description of the group $\Gamma$.

Notice that in this situation we obtain a  strong version of Bieberbach rigidity, the connected group $L$ being the same for all the discrete groups $\Gamma$.

\begin{remark}
The full automorphism group $\Aut(\C \times \H^2)$ is huge. It consists of transformations $(z, w) \mapsto (a(w) z + b(w), \phi(w))$, where $\phi \in \Isom^+(\H^2)$, $a: \H^2 \to \C^*, b: \H^2 \to \C$ are holomorphic.
\end{remark}

\subsection{Proof of Theorem~\ref{BRigidity}}  Let $M$ be a GIB manifold. That is, there exists a simply connected Riemannian product $\tilde M := \R^q \times (N, g_N)$ on which a discrete group $\Gamma \leqslant \mathrm{Sim}(\R^q) \times \Isom(N, g_N)$ acts freely, properly and cocompactly, and $M =  \Gamma \backslash \tilde M$. We assume furthermore that $N$ is a symmetric space.

We first observe that, if $N$ has a compact factor - that is $N$ is a Riemannian product $N' \times K$ with $K$ a compact symmetric space -, then $M' := \Gamma \backslash \R^q \times N'$ is still a GIB manifold. Moreover, we may assume that $\Gamma$ acts effectively on $\R^q \times N'$ by replacing it with the quotient by the subgroup acting trivially. If Theorem~\ref{BRigidity} holds for $M'$, there exists a connected group $L'$ acting properly on $M'$ such that $\Gamma'$ is a lattice in $L'$. Consequently, $L = L' \times \Isom(K)^0$ is connected, acts properly on $M$, and has $\Gamma$ as a lattice; this is precisely the group required in Theorem~\ref{BRigidity}. Hence, to prove the theorem, we may assume that $N$ has no compact factor.

We now consider the case where $N$ is of non-compact type; equivalently, $N$ has non-positive curvature and its de Rham decomposition has no Euclidean factor. We will return later to the situation in which $N$ has a Euclidean factor.

Since $(N, g_N)$ is symmetric, hence homogeneous, its isometry group $\Isom(N)$ has finitely many connected components. Replacing $\Gamma$ by a finite-index subgroup if necessary, we may assume that $P \leqslant \Isom(N)^0$, where $P$ is the group introduced in Section~\ref{SectionQuotient}. It follows that $\bar P \leqslant \Isom(N)^0$.

The group $\Isom(N)^0$ is the identity connected component of the isometry group of a symmetric space of non-compact type; in particular, it has a trivial center and it is semisimple. Consequently, $\Isom(N)^0$ is isomorphic to its image in $\GL (\mathfrak{g})$ by the adjoint representation $\mathrm{Ad}$. We denote by $\mathfrak{p}^0$ the Lie algebra of $\bar P^0$.

The image of $\Isom(N)^0$ by $\mathrm{Ad}$ is the identity connected component of the algebraic subgroup $\mathrm{Aut}(\mathfrak{g})$ of $\GL (\mathfrak{g})$, since $\Isom(N)^0$ is semisimple. Let $H$ be the normalizer of ${\bar P}^0$ in $\mathrm{Aut}(\mathfrak{g})$. In particular, $\bar P \leqslant H$, because ${\bar P}^0$ is the identity connected component of $\bar P$. Moreover,
\begin{equation} \label{normalizer}
H = \{A \in \mathrm{Aut}(\mathfrak{g}), \ A (\mathfrak{p}^0) = \mathfrak{p}^0 \}.
\end{equation}
This shows that $H$ is an algebraic subgroup of $\GL (\mathfrak g)$ because it is defined by polynomial equations. Hence $H$ is an algebraic variety over $\R$, and therefore has finitely many connected components. Up to replacing $\Gamma$ by a finite-index subgroup, we may assume that $\bar P$ is contained in $H^0 \leqslant \Isom(N)^0$. In addition, we have the following property:

\begin{lemma} \label{lemmaCocompact}
The group $P$ is cocompact in $H^0$.
\end{lemma}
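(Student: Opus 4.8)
The plan is to first establish that the closed subgroup $\bar P$ is cocompact in the full group $G := \Isom(N)^0$, then to cut down to the closed subgroup $H^0$, and finally to transfer the conclusion from $\bar P$ to its dense subgroup $P$. Realize the symmetric space as $N = G/K$, where $K$ is the compact isotropy group of a base point, and write $\varpi : G \to G/K = N$ for the projection. The only geometric input needed is that $P$, hence $\bar P$, acts cocompactly on $N$: the projection $\tilde M = \R^q \times N \to N$ induces a continuous surjection $\Gamma \backslash \tilde M \to P \backslash N$, so $P \backslash N$ is compact as a continuous image of the compact manifold $M$, and since $P \subseteq \bar P$ the quotient $\bar P \backslash N$ is compact as well.

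Next I would promote cocompactness on $N$ to cocompactness in $G$. The map $\varpi$ is proper, since $K$ is compact, and it is $\bar P$-equivariant. Picking a compact set $C_N \subseteq N$ with $\bar P \cdot C_N = N$ and setting $\tilde C := \varpi^{-1}(C_N)$, which is compact, one gets for every $g \in G$ a decomposition $\varpi(g) = \bar p \cdot c$ with $\bar p \in \bar P$ and $c \in C_N$, so that $\varpi(\bar p^{-1} g) = c$ and $\bar p^{-1} g \in \tilde C$. Hence $\bar P \, \tilde C = G$, and $\bar P$ is cocompact in $G$. This matches the Inoue--Bombieri picture, where $N = \H^2$, the group $\bar P$ is the Borel subgroup $\Aff(\R)$ acting simply transitively on $\H^2$, and $\bar P \backslash G$ is the flag circle.

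To descend to $H^0$, note that $\bar P \subseteq H^0 \subseteq G$ with $H^0$ closed, so the open quotient map $\pi : G \to \bar P \backslash G$ satisfies $\pi^{-1}(\pi(H^0)) = \bar P H^0 = H^0$; as $H^0$ is closed and $\pi$ is open, $\pi(H^0) = \bar P \backslash H^0$ is a closed subset of the compact space $\bar P \backslash G$ and is therefore compact. Thus $\bar P$ is cocompact in $H^0$. To transfer this to $P$, fix a compact set $C_0 \subseteq H^0$ with $\bar P C_0 = H^0$ and a relatively compact symmetric neighbourhood $V$ of the identity; by density of $P$ in $\bar P$, each $\bar p \in \bar P$ can be written $\bar p = p u$ with $p \in P$ and $u \in V$, so $H^0 = \bar P C_0 \subseteq P \, \overline{V C_0}$. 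Setting $C := \overline{V C_0}$, a compact set, yields $P C = H^0$, which is the desired cocompactness.

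The argument is short once the cocompact action on $N$ is available, and the point that requires genuine care is precisely the final transfer from $\bar P$ to $P$: the projection $P$ is typically non-discrete and merely dense in $\bar P$ --- already in the Inoue--Bombieri case $P$ is dense in $\Aff(\R)$ --- so one cannot pass to quotients naively, and it is the density-plus-neighbourhood argument that legitimizes the statement for $P$ rather than only for its closure. The symmetric, non-compact-type hypothesis enters only through the ambient structure set up above, namely that $G = \Isom(N)^0$ is semisimple and $N = G/K$ with $K$ compact; the cocompactness mechanism itself is insensitive to finer features of $N$.
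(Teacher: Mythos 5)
Your proof is correct and takes essentially the same route as the paper's: realize $N = G/K$ with $K$ compact isotropy, use properness of $G \to G/K$ to lift cocompactness of the action on $N$ to cocompactness in $G$, then cut down to the closed, saturated subgroup $H^0$. The only real difference is that the paper works directly with $P$ throughout --- cocompactness in the sense of a compact $C$ with $P\,C = H^0$ requires no closedness of $P$, so your detour through $\bar P$ and the density transfer back to $P$ is careful but unnecessary; note also a harmless slip in your illustrative aside: in the Inoue--Bombieri case $\bar P$ is not the full Borel subgroup $\Aff(\R)$ but only the cocompact subgroup $\lambda^{\Z} \ltimes \R$, which acts cocompactly, not simply transitively, on $\H^2$.
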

\begin{proof}
The Riemannian manifold $(N, g_N)$ is a homogeneous space, hence it can be written as a a quotient $G/K$ where $K$ is the isotropy group of an arbitrary point of $N$. In particular, $K$ is compact. Since $P$ acts cocompactly on $N$, the double quotient $P\backslash G /K$ is compact, and therefore $P \backslash G$ is compact as well. Moreover, since $P \subset H^0 \subset G$, it follows that $P \backslash H^0$ is also compact.
\end{proof}

To show that the group we construct acts transitively, we also need the following:

\begin{lemma} \label{transitivity}
The group $H^0$ acts transitively on $N$.
\end{lemma}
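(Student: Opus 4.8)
The plan is to prove transitivity by showing that a single orbit $W := H^0 \cdot o$ (for any base point $o \in N$) is both open and closed in $N$; since $N$ is connected, this forces $W = N$. Closedness is immediate: $H$ is algebraic, so its identity component $H^0$ is closed in $\mathrm{Aut}(\g) \cong \Isom(N)^0$, whence $H^0$ is a closed subgroup of $\Isom(N)$ acting properly and isometrically on $N$, and orbits of proper actions are closed embedded submanifolds. Thus everything reduces to the single statement that $W$ is open, i.e. that $\dim W = \dim N$.

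Openness is where cocompactness enters. Since $P \le H^0 \le \Isom(N)^0 =: G$ and $P$ is cocompact in $G$ (in the proof of Lemma~\ref{lemmaCocompact}, $P \backslash G$ is shown to be compact), the quotient $H^0 \backslash G$ is compact as well, so $H^0$ acts cocompactly on $N = G/K$. Choosing a compact $C \subset N$ with $H^0 \cdot C = N$ and setting $D := \sup_{c \in C} d_N(o,c)$, the $H^0$-invariance of $W$ together with the isometric action gives, for every $x = h \cdot c \in N$, the bound $d_N(x, W) = d_N(c, h^{-1} W) = d_N(c, W) \le D$; hence $W$ is $D$-coarsely dense in $N$. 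In this form the lemma becomes an instance of the general principle that a connected closed cocompact subgroup of a semisimple Lie group of non-compact type acts transitively on its symmetric space, and it suffices to upgrade coarse density of the homogeneous submanifold $W$ to full dimensionality.

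I expect this last upgrade to be the main obstacle. The plan is to exploit that $N$, being symmetric of non-compact type, is a Hadamard manifold whose balls grow exponentially with a definite volume entropy $h = \lvert 2\rho \rvert > 0$, while $W = H^0 \cdot o$ is homogeneous and therefore has uniformly bounded second fundamental form. Coarse density yields $\mathrm{vol}\, B_N(o,R) \le J(D)\, \mathrm{vol}_{k}\big(W \cap B_N(o, R+D)\big)$, with $k = \dim W$ and $J(D)$ a tube constant controlled by the curvature and the bound on the second fundamental form. If $k < n := \dim N$, the Gauss equation bounds the intrinsic curvature of $W$ from below, Bishop--Gromov bounds its intrinsic volume growth, and comparing exponents contradicts the exponential lower bound $\mathrm{vol}\, B_N(o,R) \ge c\, e^{hR}$; the delicate point is passing from intrinsic to extrinsic balls for the properly embedded $W$. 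A structural alternative, likely cleaner, is to show directly that the cocompact connected subgroup $H^0$ must contain a subgroup conjugate to the Iwasawa factor $AN$ of $G$; since $AN$ acts simply transitively on $G/K$, transitivity of $H^0$ follows at once. Either way the crux is promoting ``cocompact'' to ``transitive,'' the remaining verifications (closedness, coarse density) being routine.
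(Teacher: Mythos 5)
Your reduction is sound as far as it goes: the orbit $W=H^0\cdot o$ is closed by properness, cocompactness gives coarse density, and transitivity is equivalent to openness of $W$. But the upgrade from coarse density to openness is precisely the content of the lemma, and neither of your two strategies for it is carried out, so the proposal has a genuine gap exactly at its self-identified crux. The volume-comparison route, in particular, would not go through as sketched. First, for higher-rank $N$ the sectional curvature is not strictly negative (there are flats), so you are comparing two quantities that both grow exponentially, and you never produce the quantitative exponent gap you would need. More fatally, the step that fails is the upper bound on $\mathrm{vol}_k\big(W\cap B_N(o,R)\big)$: a bound on the second fundamental form gives no a priori control of the volume of a properly embedded submanifold inside an \emph{extrinsic} ball, because intrinsic and extrinsic distances can differ exponentially --- on a horosphere of $\H^n$, the extrinsic ball of radius $R$ meets the horosphere in an intrinsic flat ball of radius comparable to $e^{R/2}$ --- so Bishop--Gromov applied to the intrinsic metric of $W$ does not translate into the extrinsic estimate your tube inequality requires. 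As for the ``structural alternative'' that $H^0$ contains a conjugate of the Iwasawa factor $AN$: this is asserted without proof, and as a statement about closed connected cocompact subgroups of semisimple groups it is at least as strong as the lemma itself, so invoking it is circular unless you supply an independent argument.

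The paper closes exactly this gap with a soft ingredient you are missing: Montgomery's theorem on transitive actions on compact manifolds. Since $G:=\Isom(N)^0$ is connected and acts transitively on the compact manifold $G/H$ (compactness of $G/H$ follows from the proof of Lemma~\ref{lemmaCocompact}, as $P\leqslant H$ and $P\backslash G$ is compact), \cite[Corollary 2]{Mon} yields a compact subgroup of $G$ acting transitively on $G/H$; by maximality of $K$ and conjugacy of maximal compact subgroups one may take it to be $K$, whence $G=KH=HK$ and $H$ acts transitively on $N=G/K$. Transitivity of $H^0$ then follows because $H$ has finitely many components, so the $H^0$-orbits are open in $N$ and connectedness forces a single orbit. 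If you want to repair your write-up with minimal change, discard the volume estimates entirely: your observation that $H^0\backslash G$ is compact is the correct input, and Montgomery's theorem is the tool that converts it into transitivity in one line.
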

\begin{proof}
Since $N$ is a symmetric space of non-compact type, the group $\Isom(N)$ is semisimple, and we have an identification $N \simeq \Isom(N) / K$, where $K$ is the maximal compact subgroup of $\Isom(N)$. The group $\Isom(N)^0$ acts transitively on the compact manifold $\Isom(N)^0 /H$ and $H$ has a finite number of connected components. Therefore, by \cite[Corollary 2]{Mon}, there exists a compact subgroup of $\Isom(N)^0$ acting transitively on $\Isom(N)^0 /H$. In particular, $K$ acts transitively on $\Isom(N)^0 /H$ by maximality. Consequently, $H$ acts transitively on $N$ and so does $H^0$.
\end{proof}

Replacing $\Gamma$ by a finite-index subgroup if necessary, we may assume that the projection of $\Gamma$ onto $\Sim(\R^q)$ preserves orientation. By construction, $H^0$ normalizes the abelian group ${\bar P}^0$. In particular, $H^0$ acts on ${\bar  P}^0$ by conjugation, and this action may be viewed as a matrix group action on the Lie algebra $\mathfrak p^0$ of ${\bar P}^0$. We fix a basis $\mathcal B$ of $\mathfrak p^0$, and we define the map $\varphi$, assigning to any $h \in H^0$ the absolute value of the determinant of $\mathfrak p^0 \ni x \mapsto \mathrm Ad_h x$ in the basis $\mathcal B$. Let $\Sim^+(\R^q)$ denote the group of orientation-preserving similarities of $\R^q$ and let $\rho : \Sim^+(\R^q) \to \R^*_+$ be the map giving the ratio of a similarity of $\R^q$. We consider the subgroup of $\Sim(\R^q) \times \Isom(N)$ defined by
\begin{equation}
L := \{ (s, h) \in \Sim^+(\R^q) \times H^0, \ \rho(s) = \varphi(h)^{-1/q} \}.
\end{equation}

The group $L$ is connected. Indeed, for any $(s, h)$ of $L$, there is a continuous path from $(s, h)$ to $(\rho(s) \mathrm{Id}, h)$ by connectedness of the direct Euclidean group $\R^q \rtimes \SO(q)$. It remains to remark that $H^0$ is connected, and that $\varphi(\cdot)^{-1/q}$ is continuous, to conclude that $H^0 \ni h \mapsto(\varphi(h)^{-1/q}, h)$ has a connected image. We therefore obtain that $L$ is connected. We claim that:

\begin{lemma} \label{le1}
The group $L$ contains $\Gamma$.
\end{lemma}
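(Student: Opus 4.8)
The plan is to verify that each $\gamma \in \Gamma$ satisfies the single scalar relation $\rho(s) = \varphi(h)^{-1/q}$ defining $L$, and to read this relation off from the conjugation action of $\Gamma$ on the distinguished abelian normal subgroup $\Gamma_0 := \Gamma \cap (\Sim(\R^q) \times {\bar P}^0)$. Recall from the discussion following Theorem~\ref{Foliation} (namely \cite[Lemma 4.18]{Kou}) that $\Gamma_0$ is a lattice in the connected abelian group $V := \R^q \times {\bar P}^0$, where $\R^q$ now denotes the translation subgroup of $\Sim(\R^q)$. First I would record that $\Gamma_0$ is normal in $\Gamma$: writing an arbitrary element as $\gamma = (s,p)$ with $s \in \Sim^+(\R^q)$ and $p \in P \subseteq H^0$, conjugation by $\gamma$ preserves the subgroup $\Sim(\R^q) \times {\bar P}^0$, because $p$ normalizes ${\bar P}^0$ (by definition $H$ is the normalizer of ${\bar P}^0$ and $P \subset H^0 \subset H$). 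Hence $\gamma \Gamma_0 \gamma^{-1} = \Gamma_0$.

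Next I would fix $\gamma = (s,p) \in \Gamma$ and analyze the automorphism $\alpha$ of $V$ given by conjugation, which preserves the cocompact lattice $\Gamma_0$. Because the ambient group is the \emph{direct} product $\Sim(\R^q) \times \Isom(N)$, this automorphism is block diagonal with respect to the splitting $\mathrm{Lie}(V) = \R^q \oplus \p^0$: on the translation factor it is the linear part of $s$, namely $\rho(s)\,O$ with $O \in \SO(q)$ (using that the projection to $\Sim(\R^q)$ has been normalized to be orientation preserving), while on ${\bar P}^0$ it is conjugation by $p$, whose differential on $\p^0$ is $\Ad_p$. Thus $d\alpha_e = \rho(s)\,O \oplus \Ad_p|_{\p^0}$.

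The crux is then to show that $|\det d\alpha_e| = 1$. Since $\alpha(\Gamma_0) = \Gamma_0$, it descends to a diffeomorphism of the compact quotient $\Gamma_0 \backslash V$; as $\alpha$ is an automorphism of an abelian Lie group its Jacobian is the constant $|\det d\alpha_e|$, so a fixed bi-invariant volume form is scaled everywhere by this same factor, and invariance of the total volume of $\Gamma_0 \backslash V$ forces the factor to equal $1$. Computing the determinant block by block then yields
\[
1 = |\det d\alpha_e| = |\det(\rho(s)\,O)|\cdot|\det(\Ad_p|_{\p^0})| = \rho(s)^q\,\varphi(p),
\]
using that $|\det O| = 1$ and that $\varphi(p)$ is by definition the absolute value of the determinant of $\Ad_p$ on $\p^0$ in the basis $\mathcal B$. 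Hence $\rho(s) = \varphi(p)^{-1/q}$, which is precisely the defining relation of $L$; since also $s \in \Sim^+(\R^q)$ and $p \in H^0$, we obtain $\gamma \in L$, and therefore $\Gamma \subseteq L$.

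I expect the main obstacle to be the clean justification that the lattice-preserving automorphism $\alpha$ is unimodular, i.e. that $|\det d\alpha_e| = 1$; once this is in hand, the block-diagonal computation and the identification of the two blocks with $\rho(s)^q$ and $\varphi(p)$ are routine. The structural input that $\Gamma_0$ is an abelian cocompact lattice in $\R^q \times {\bar P}^0$ is essential here, and this is exactly what is supplied by the earlier sections.
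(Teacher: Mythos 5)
Your proof is correct and follows essentially the same route as the paper: both hinge on the fact that conjugation by $\gamma = (s,p) \in \Gamma$ preserves the lattice $\Gamma_0$ in $\R^q \times {\bar P}^0$, followed by the block-diagonal determinant computation giving $\rho(s)^q\,\varphi(p) = 1$. The only (cosmetic) difference is in how unimodularity is justified: you use invariance of the volume of the compact quotient $\Gamma_0 \backslash (\R^q \times {\bar P}^0)$, whereas the paper observes that the action on the Lie algebra preserves the lattice $\gamma_0$ and is therefore given by a matrix in $\GL_{q+m}(\Z)$ in a lattice basis, hence of determinant $\pm 1$.
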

\begin{proof}
We recall that the group $\Gamma_0 = \Gamma \cap (\R^q \times {\bar P}^0)$, defined in the paragraph following Corollary~\ref{P0abelian} (see also \cite[Lemma 4.18]{Kou}), is a lattice in $\R^q \times {\bar P}^0$. The group $\Gamma$ acts by conjugation on $\R^q \times {\bar P}^0$ and preserves $\Gamma_0$. Thus, its action may be viewed as a matrix group action on the Lie algebra $\R^q \times \mathfrak p^0$ of $\R^q \times {\bar P}^0$. The preimage $\gamma_0$ of $\Gamma_0$ is a lattice in $\R^q \times \mathfrak p^0$, hence $\Gamma$ is a subgroup of $\GL (\Z^{q+m})$ in a basis $\mathcal B_0$ of $\gamma_0$ (which is also a basis of $\R^q \times \mathfrak p^0$). Given $(s, p) \in \Gamma \subset \Sim^+(\R^q) \times G$, the matrix of $\mathrm{Ad}_{(s,m)}$ in $\mathcal B_0$ has determinant $\pm 1$, and so does its matrix in the concatenation $(\mathcal B_q, \mathcal B)$, where $\mathcal B_q$ is the canonical basis of $\R^q$ and $\mathcal B$ is the basis of $\mathfrak p^0$ introduced above. We deduce that the ratio of $s$ is $\varphi(p)^{-1/q}$. Therefore, $(s, p)$ is in $L$.
\end{proof}

Finally, $\Gamma$ and $L$ possess the structural properties required for Bieberbach-type rigidity:

\begin{lemma} \label{le2}
The group $\Gamma$ is a lattice in $L$ and $L$ acts properly on $\tilde M$.
\end{lemma}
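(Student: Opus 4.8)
The plan is to treat the two claims in turn, deriving the lattice property at the end from properness and transitivity of the $L$-action together with the cocompactness of $\Gamma$ assumed in the hypotheses of Theorem~\ref{BRigidity}.

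The heart of the argument, and the step I expect to be the main obstacle, is the properness of the $L$-action on $\tilde M = \R^q \times N$. The subtlety is that $\Sim^+(\R^q)$ does \emph{not} act properly on $\R^q$, because of the dilations; the whole purpose of the defining relation $\rho(s) = \varphi(h)^{-1/q}$ is precisely to remedy this by coupling the similarity ratio on the $\R^q$-factor to the behaviour of $h$ on $N$. I would first note that $H^0$, the identity component of the algebraic group $H$, lies in $\Aut(\g)^0 \cong \Isom(N)^0$ as a closed subgroup, hence is closed in $\Isom(N)$ and acts properly on the complete manifold $N$. Now fix a compact set $C = C_1 \times C_2 \subset \R^q \times N$ and suppose $(s,h) \in L$ satisfies $(s,h)\,C \cap C \neq \emptyset$. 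Then $h\,C_2 \cap C_2 \neq \emptyset$, so $h$ remains in a compact subset $H_{C_2} \subset H^0$ by properness; since $\varphi$ is continuous and positive, it is bounded above and below on $H_{C_2}$, so the ratio $\rho(s) = \varphi(h)^{-1/q}$ lies in a fixed compact subinterval of $\R^*_+$. Writing $s(x) = \lambda R x + w$ with $\lambda$ in that interval and $R \in \SO(q)$, the condition $s\,C_1 \cap C_1 \neq \emptyset$ forces $w = y - \lambda R x$ for some $x,y \in C_1$, hence $w$ is bounded and $s$ stays in a compact subset of $\Sim^+(\R^q)$. As $L$ is closed in $\Sim^+(\R^q) \times \Isom(N)$, being the level set $\{\rho(s)\varphi(h)^{1/q} = 1\}$ of a continuous map, the return set $\{(s,h) \in L : (s,h)\,C \cap C \neq \emptyset\}$ is closed and relatively compact, thus compact; this is exactly properness.

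Next I would check transitivity: given $(v,y),(v',y') \in \R^q \times N$, Lemma~\ref{transitivity} provides $h \in H^0$ with $h\,y = y'$, and then $s(x) := \varphi(h)^{-1/q}(x - v) + v'$ is an orientation-preserving similarity of the prescribed ratio $\varphi(h)^{-1/q}$ sending $v$ to $v'$, so $(s,h) \in L$ maps $(v,y)$ to $(v',y')$. Properness then makes the stabilizer $L_{(v,y)}$ compact and yields an $L$-equivariant identification $\tilde M \simeq L / L_{(v,y)}$. Finally, $\Gamma \subset L$ by Lemma~\ref{le1} and $\Gamma$ is discrete in $L$, its topology being induced from the ambient group. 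The identification above gives $\Gamma \backslash \tilde M \simeq \Gamma \backslash L / L_{(v,y)}$; since $\Gamma$ acts cocompactly on $\tilde M$ and $L_{(v,y)}$ is compact, the quotient of $\Gamma \backslash L$ by the compact group $L_{(v,y)}$ is compact, whence $\Gamma \backslash L$ itself is compact. A discrete subgroup of a locally compact group with compact quotient is a uniform lattice --- this also forces $L$ to be unimodular, consistently with the determinant-one flavour of the defining condition of $L$ --- so $\Gamma$ is a lattice in $L$, completing the proof.
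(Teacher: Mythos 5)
Your proof is correct, and it reaches both conclusions by a genuinely different route than the paper. For properness, the paper argues structurally: the normal subgroup of translations $\R^q \trianglelefteq L$ acts properly on the flat factor, and the coupling $\rho(s) = \varphi(h)^{-1/q}$ forces $L/\R^q \simeq \SO(q) \times H^0$, which acts properly on $N$ ($\SO(q)$ compact, $H^0$ closed in the isometry group of the complete manifold $(N,g_N)$). Your direct return-set computation proves the same statement and makes explicit where the coupling matters: it is what bounds the ratio $\rho(s)$ once $h$ is confined to a compact subset of $H^0$; without it, the ratio factor $\R^*_+$, acting trivially on $N$, would destroy properness, exactly as you flagged. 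For the lattice property the divergence is real: the paper never touches the $\Gamma$-action on $\tilde M$ at this stage, but instead proves cocompactness of $\Gamma$ in $L$ by d\'evissage through the normal subgroup $\R^q \times {\bar P}^0$, in which $\Gamma_0$ is a lattice by \cite[Lemma 4.18]{Kou}, combined with Lemma~\ref{lemmaCocompact} (cocompactness of $P$ in $H^0$) on the quotient $L/(\R^q \times {\bar P}^0) \simeq \SO(q) \times (H^0/{\bar P}^0)$. You instead first upgrade Lemma~\ref{transitivity} to transitivity of $L$ on $\tilde M$ (a step the paper only performs implicitly when combining the three lemmas into Theorem~\ref{BRigidity}), identify $\tilde M \simeq L/L_{(v,y)}$ with compact stabilizer via properness, and transfer the hypothesis that $\Gamma \backslash \tilde M$ is compact into compactness of $\Gamma \backslash L$. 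What each buys: the paper's argument keeps Lemma~\ref{le2} independent of transitivity and extracts cocompactness purely from the group structure via $\Gamma_0$ and Lemma~\ref{lemmaCocompact}; yours avoids those structural inputs and runs on standard locally-compact-group facts, at the cost of a few routine verifications you correctly gesture at (reduction to product compacts $C_1 \times C_2$, openness of the orbit map in the identification $\tilde M \simeq L/L_{(v,y)}$, and lifting a compact subset of $\Gamma \backslash L / L_{(v,y)}$ to conclude that $\Gamma \backslash L$ is compact).
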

\begin{proof}
The group $L$ normalizes $\R^q \times {\bar P}^0$, so $L/(\R^q \times {\bar P}^0)$ is isomorphic to $\SO(q) \times (H^0/{\bar P}^0)$. By Lemma~\ref{lemmaCocompact} $\bar P$ is cocompact in $H^0$, thus $\bar P/{\bar P}^0$ acts cocompactly on $L/(\R^q \times {\bar P}^0)$. It remains to remark that $\Gamma_0$ acts cocompactly on $\R^q \times {\bar P}^0$ to prove that $\Gamma$ acts cocompactly on $L$.

To prove that $L$ acts properly on $\tilde M$, we observe that, since $\R^q \subset L$ acts properly on $\R^q$, is it sufficient to prove that $L/\R^q$ acts properly on $N$. But $L/\R^q$ is isomorphic to $\SO(q) \times H^0$, and this group acts properly on $N$ because $\SO(q)$ is compact and $H^0$ acts properly on $N$ as a closed subgroup of the isometry group of the complete Riemannian manifold $(N, g_N)$.
\end{proof}

Lemma~\ref{transitivity}, Lemma~\ref{le1} and Lemma~\ref{le2} together imply Theorem~\ref{BRigidity} is the situation where $N$ is of non-compact type. When $N$ has a Euclidean factor of dimension $k$, the proof proceeds in the same way provided $k\le2$. Indeed, the adjoint representation of the Euclidean group $E(n)$ is injective, and its image is an algebraic subgroup of $\mathrm{GL}(\mathfrak{e}(n))$, where $\mathfrak{e}(n)$ denotes the Lie algebra of $E(n)$. In the case $k = 1$, we simply enlarge the group $H$ in the proof by including the orientation-preserving Euclidean group $E^+(1)$. Since $E^+(1) \simeq \R$ commutes with $\Isom(N)^0$, the group $H^0$ still normalizes ${\bar P}^0$, and the proof concludes exactly as in the case where $N$ is of non-compact type.

\section{Classification results for $N$ of negative curvature} \label{Section3}

In this section we retain the same setting as in Section~\ref{SectionQuotient}, i.e. $M := \Gamma \backslash (\R^q \times N)$ is a GIB manifold, and we assume that $N$ has negative curvature. Our goal is to understand the constraints this imposes on $N$. We begin with an illustrative example, namely the real hyperbolic space $\H^n$. Understanding this particular case will prove instructive for the analysis of negatively curved manifolds later on.

We recall that a Hadamard manifold $M$ admits a border at infinity, denoted by $\partial M$, defined as the set of geodesic rays quotiented by the relation $\alpha \sim \beta \Leftrightarrow t \mapsto d(\alpha(t), \beta(t))$ is bounded on $[0, + \infty)$. The isometries of $M$ fall into three classes: elliptic isometries, which have a fixed point in $M$, hyperbolic isometries, which fix a complete geodesic in $M$ and act on it by a non-trivial translation, and parabolic isometries, which are all the remaining isometries. 

When the sectional curvature of $M$ is bounded from above by a negative constant, hyperbolic isometries are precisely those that have exactly two fixed points on $\partial M$ and no fixed point in $M$. Likewise, parabolic isometries are those that have exactly one fixed point in $\partial M$ and no fixed point in $M$. For a more complete exposition, see for instance \cite{BGS}.

\subsection{The case of the real hyperbolic space} \label{HyperbolicSpace}

As already noticed, the quotients studied in this paper may be viewed as generalizations of the $S^0$-Inoue--Bombieri surfaces. Using a construction similar to that of Example~\ref{S0 construction}, we can produce examples of GIB manifolds whose universal cover is $\R^q \times \H^n$, where $\H^n$ denotes the $n$-dimensional hyperbolic space.

\begin{example} \label{Ex1}
We consider $\tilde M := \R^{q+n-1} \times \R^*_+$ and choose a matrix $A \in \GL_{q+n-1}(\Z)$ such that there is a decomposition
\[
\R^{n+1} =: E \oplus F, \quad \mathrm{dim}(E) = q,
\]
two scalar products $b_E$ and $b_F$ on $E$ and $F$ respectively, and a real number $\lambda > 0$ satisfying $A\vert_E \in \lambda O(b_E)$ and $A\vert_F \in \lambda^{-q/\mathrm{dim}(F)} O(b_F)$. We let the group
\begin{equation}
\Gamma := \Z^{q+n-1} \rtimes \langle \R^{q+n-1} \times \R^*_+ \ni (x,t) \mapsto (A x, \lambda^{-1} t) \rangle
\end{equation}
act on $\tilde M$. We endow $\tilde M$ with the metric
\begin{equation}
h := b_E + \frac{1}{t^2} (b_F + d t^2).
\end{equation}
One verifies that $(\tilde M, h) \simeq \R^q \times \H^n$. Moreover, the group $\Gamma$ is a subgroup of $\Sim(\R^q) \times \Isom(\H^n)$ and it acts properly, freely and cocompactly on $\tilde M$.

In the case where $q = 2$ and $n = 3$, we may choose the matrix
\begin{equation} \label{Ablock}
A = \left( \begin{matrix} 2 & 1  &  & \\ 1 & 1 & & \\ & & 2 & 1 \\ & & 1 & 1 \end{matrix} \right),
\end{equation}
which is diagonalizable with two eigenvalues $\lambda$ and $\lambda^{-1}$ of multiplicity $2$.
\end{example}

\begin{remark}
From Example~\ref{Ex1} we can formulate an interesting question on matrices with integer coefficients: what are the matrices of $\GL_p(\mathrm{Z})$ (for an integer $p>0$) such that there exists a decomposition $\R^p =: E \oplus F$ satisfying the conditions of the example? A first result in this direction was given in \cite[Proposition 3]{MMP}, showing that when $\dim(E) = 1$, one has $p \in \{2,3\}$. We remark that these matrices induce an Anosov diffeomorphism on the torus with a single Lyapunov exponent.

Note also that we can always construct suitable matrices in the cases $q = n-1$ and $q = 2(n-1)$ taking block matrices as in \eqref{Ablock}.
\end{remark}

The key ingredient in Example~\ref{Ex1} is the existence of the matrix $A$. In fact, we can show:
\begin{claim} \label{matrixExistence}
The existence of such a matrix is a necessary and sufficient condition  for the existence of a group $\Gamma \subset \Sim(\R^q) \times \Isom(\H^n)$ acting freely, properly discontinuously and cocompactly on $\R^q \times \H^n$.
\end{claim}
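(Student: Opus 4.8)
The plan is to prove the two implications separately. The ``sufficient'' direction is essentially the verification already packaged in Example~\ref{Ex1}, while the ``necessary'' direction carries the real content: the matrix $A$ will be reconstructed as the conjugation action of an essential similarity of $\Gamma$ on a canonical abelian lattice inside $\Gamma$. Throughout I use that, being a GIB situation, the projection of $\Gamma$ to $\Sim(\R^q)$ contains a strict similarity.

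For sufficiency, I would start from a matrix $A \in \GL_{q+n-1}(\Z)$ with the stated block behaviour, set $\mu := \lambda^{-q/\dim(F)}$, and let $\Gamma := \Z^{q+n-1} \rtimes \langle (x,t) \mapsto (Ax, \mu t)\rangle$ act on $\tilde M = \R^{q+n-1} \times \R^*_+$ equipped with $h = b_E + t^{-2}(b_F + dt^2)$. A direct computation shows that the generator restricts to a $b_E$-similarity of ratio $\lambda$ on the $E = \R^q$ factor and, because $A|_F \in \mu\, O(b_F)$, to a genuine isometry (a dilation composed with an orthogonal map) of $\H^n \simeq (F \times \R^*_+, t^{-2}(b_F + dt^2))$; hence $\Gamma \subset \Sim(\R^q) \times \Isom(\H^n)$. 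Freeness holds since the generator scales $t$ by $\mu \neq 1$ while $\Z^{q+n-1}$ acts by translations, and properness and cocompactness hold because the quotient is a torus bundle over the circle.

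For necessity, suppose $\Gamma \subset \Sim(\R^q) \times \Isom(\H^n)$ acts freely, properly and cocompactly, with an essential similarity in its projection to $\Sim(\R^q)$. By Corollary~\ref{P0abelian} the group $\bar P^0$ is abelian, and one shows (this is the hyperbolic special case of the conclusion of Theorem~\ref{Classification Negative}) that $\bar P^0$ is the full horospherical translation group $\cong \R^{n-1}$ fixing a point $\xi \in \partial \H^n$. The decisive structural input is then Theorem~\ref{Foliation}: the subgroup $\Gamma_0 := \Gamma \cap (\Sim(\R^q) \times \bar P^0)$ is a lattice in the abelian group $V := \R^q \times \bar P^0 \cong \R^{q+n-1}$, where $\R^q$ denotes the translation subgroup of $\Sim(\R^q)$; thus $\Gamma_0 \cong \Z^{q+n-1}$. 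Now pick an essential similarity $\gamma = (\phi, p) \in \Gamma$ of ratio $\lambda \neq 1$. Since $\bar P^0$ is normalized by $\bar P \supseteq P$ and the $\Sim(\R^q)$-factor is preserved, $\gamma$ normalizes $V$ and $\Gamma_0$, so its conjugation action is a linear automorphism $A : V \to V$ preserving the lattice $\Gamma_0$, i.e. $A \in \GL_{q+n-1}(\Z)$.

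It remains to read off the block structure of $A$ in the splitting $E := \R^q$, $F := \bar P^0 \cong \R^{n-1}$, which is $A$-invariant because $\phi$ preserves the $\R^q$-translations and $p$ normalizes the horospherical translations. On $E$ the conjugation is the linear part of $\phi$, namely $\lambda R_E$ with $R_E \in O(q)$, so $A|_E \in \lambda\, O(b_E)$ for the Euclidean $b_E$. On $F$ the conjugation by $p$ acts on the horospherical translations as $\mu R_F$ with $R_F \in O(n-1)$ and $\mu$ the dilation factor of $p$ toward $\xi$, so $A|_F \in \mu\, O(b_F)$ for the horospherical metric $b_F$. Finally, since $A$ preserves a lattice, $\det A = \pm 1$, whence $\lambda^q \mu^{\,n-1} = 1$ and $\mu = \lambda^{-q/(n-1)} = \lambda^{-q/\dim(F)}$, which is exactly the required condition. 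I expect the main obstacle to be the first step of necessity: establishing directly, in this introductory hyperbolic case, that $\bar P^0$ is precisely the full horospherical translation group and that the essential similarity genuinely supplies a hyperbolic (dilation) direction transverse to the horospheres --- that is, controlling the dynamics of the abelian group $\bar P^0 \subset \SO(n,1)^0$ together with its normalizing similarity. Once this normal form is in place, the extraction of $A$ and the determinant bookkeeping are routine.
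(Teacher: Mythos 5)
Your necessity argument, from the point where the normal form of $\bar P$ is in hand, coincides with the paper's: the paper also extracts $A$ as the conjugation action of an essential element $\gamma \in \Gamma$ on the lattice $\Gamma_0 = \Gamma \cap (\Sim(\R^q) \times {\bar P}^0)$ inside $\R^q \times {\bar P}^0 \simeq \R^{q+n-1}$ (a fact it quotes from Kourganoff in the discussion after Corollary~\ref{P0abelian}, rather than from Theorem~\ref{Foliation}, but this is the same input), reads off the two similarity blocks from the stabilizer of the fixed point at infinity, and gets the exponent $-q/(n-1)$ from integrality. You even make explicit the determinant bookkeeping $\lambda^q\mu^{n-1} = \vert \det A \vert = 1$, which at this spot the paper leaves implicit (it is spelled out only in the proof of Lemma~\ref{le1}). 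Sufficiency is, for both of you, Example~\ref{Ex1}; note only that your freeness remark needs the standing GIB hypothesis $\lambda \neq 1$, which is indeed in force since Section~\ref{Section3} retains the setting of Section~\ref{SectionQuotient}.

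The real divergence is the step you yourself flag as the main obstacle: you import the statement that $\bar P$ fixes a point of $\partial \H^n$ and ${\bar P}^0$ is the full horospherical translation group from Theorem~\ref{Classification Negative}. This citation is formally non-circular (the Claim is nowhere used in Section~\ref{GeneralNegativeCurvature}), but it inverts the paper's logic: the paper proves this normal form from scratch in Section~\ref{HyperbolicSpace} --- Lemma~\ref{P0notcompact} (a compact ${\bar P}^0$ would yield a lattice in $\R^q$ invariant under a strict similarity), Lemma~\ref{FixedPoint} (abelianness of ${\bar P}^0$ plus normality in $\bar P$ give a common fixed point at infinity, hyperbolic elements being excluded because the stabilizer of a geodesic cannot act cocompactly), the contraction-of-fundamental-domain argument showing $\mathcal H = \bar P \cap \Isom(\R^{n-1})$ acts transitively on the horosphere, and the unlabelled lemma that a connected abelian subgroup of $\Isom(\R^m)$ acting transitively consists of translations --- and the proof of Theorem~\ref{Classification Negative} then explicitly proceeds ``as in the hyperbolic space case.'' So within the paper the general theorem is not available as an independent black box, and a self-contained proof of the Claim must supply exactly those four steps; this is where the bulk of the paper's work lies. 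One small simplification you could note: your worry about the essential similarity ``genuinely supplying a dilation direction transverse to the horospheres'' needs no separate argument in the final extraction --- once $A$ preserves the lattice, $\det A = \pm 1$ together with $\lambda \neq 1$ forces $\mu = \lambda^{-q/(n-1)} \neq 1$ automatically (though a strict similarity in $\bar P$ is used earlier, inside the transitivity argument for $\mathcal H$).
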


The sufficiency follows from the construction in Example~\ref{Ex1}, so it remains to establish necessity. Recall that in this setting, the group $\bar P$ is defined in Section~\ref{SectionQuotient} as the closure of the projection of $\Gamma$ onto $\Isom(\H^n)$, and that ${\bar P}^0$ denotes its identity connected component.

We begin by proving a lemma that holds in full generality:

\begin{lemma} \label{P0notcompact}
The group ${\bar P}^0$ is not compact.
\end{lemma}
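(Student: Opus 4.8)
The plan is to argue by contradiction, assuming that $\bar P^0$ is compact and deriving a conflict with the presence of a strict similarity. The starting structural observation is that by Corollary~\ref{P0abelian} the group $\bar P^0$ is connected and abelian; if it were also compact it would have to be a torus $\bar P^0 \simeq \R^m/\Z^m$. The point of isolating this is that then \emph{every} automorphism of $\bar P^0$ preserves the integral lattice $\Lambda_0 := \ker\!\big(\exp|_{\mathfrak{p}^0}\big)$ of its Lie algebra $\mathfrak{p}^0$, and hence acts on $\mathfrak{p}^0$ by an element of $\mathrm{GL}_m(\Z)$, in particular with determinant $\pm 1$. This will apply, in particular, to conjugation by any element normalizing $\bar P^0$.

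Next I would bring in the strict similarity. By the standing assumption, there is an element $\gamma=(s,p)\in\Gamma$ whose $\Sim(\R^q)$-component $s$ is a strict similarity, i.e.\ $\lambda := \rho(p)\neq 1$. Recall (the paragraph following Corollary~\ref{P0abelian}, after \cite[Lemma~4.18]{Kou}) that $\Gamma_0 := \Gamma\cap\big(\Sim(\R^q)\times\bar P^0\big)$ is a lattice in $\R^q\times\bar P^0$, where $\R^q$ is the translation subgroup of $\Sim(\R^q)$. The group $\Gamma$ normalizes $\R^q\times\bar P^0$ — conjugation by a similarity carries translations to translations, and $\bar P^0$ is normal in $\bar P$ — and it preserves $\Gamma_0$, exactly as recorded in the proof of Lemma~\ref{le1}.

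The contradiction then comes from computing the determinant of the conjugation $\mathrm{Ad}_\gamma$ on $\R^q\times\bar P^0$ in two ways. Passing to the universal cover $V:=\R^q\times\mathfrak{p}^0$ via $\mathrm{id}\times\exp$, the preimage $\gamma_0$ of $\Gamma_0$ is a lattice in $V\cong\R^{q+m}$; since the linear lift of $\mathrm{Ad}_\gamma$ preserves $\gamma_0$, in a $\Z$-basis of $\gamma_0$ it lies in $\mathrm{GL}_{q+m}(\Z)$, so $\det\mathrm{Ad}_\gamma=\pm1$. On the other hand this lift is block-diagonal for the splitting $V=\R^q\oplus\mathfrak{p}^0$: on $\R^q$ it is $v\mapsto \lambda R v$ (the linear part of $s$), of determinant $\pm\lambda^q$, and on $\mathfrak{p}^0$ it is $\mathrm{Ad}_p|_{\mathfrak{p}^0}$, of determinant $\pm1$ by the torus remark above. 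Hence $\pm1=\det\mathrm{Ad}_\gamma=\pm\lambda^q$, forcing $\lambda^q=1$, i.e.\ $\lambda=1$ (as $\lambda>0$ and $q\ge 1$), which contradicts $\lambda\neq1$. Therefore $\bar P^0$ is not compact.

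The only genuinely structural input is the identification of a compact connected abelian group with a torus, which is what pins the conjugation action on $\mathfrak{p}^0$ into $\mathrm{GL}_m(\Z)$; the rest is the elementary conjugation-of-translations computation together with the unimodularity forced by preservation of the lattice $\gamma_0$. I do not expect a serious obstacle: the main care needed is the bookkeeping between the lattice basis of $\gamma_0$ and the split basis $\R^q\oplus\mathfrak{p}^0$, but this is precisely the block-diagonal determinant already used in the proof of Lemma~\ref{le1}, and the degenerate case $m=0$ (that is, $\bar P^0$ trivial) is handled by the same argument with an empty $\mathfrak{p}^0$-block.
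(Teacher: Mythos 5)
Your proof is correct, but it derives the contradiction by a genuinely different mechanism than the paper. The paper also starts from the fact that $\Gamma_0 = \Gamma \cap (\R^q \times {\bar P}^0)$ is a lattice in $\R^q \times {\bar P}^0$, but then uses compactness of ${\bar P}^0$ \emph{metrically}: since $\Gamma_0$ is discrete and ${\bar P}^0$ is compact, the projection $\Gamma_0'$ of $\Gamma_0$ onto $\R^q$ is still discrete, hence a lattice in $\R^q$; this lattice is preserved by the conjugation action of $\Gamma$ on $\R^q$, and conjugating by an element whose similarity ratio is $\neq 1$ (in the contracting direction) produces nonzero vectors of arbitrarily small norm in $\Gamma_0'$, contradicting discreteness. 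You instead use compactness \emph{arithmetically}: ${\bar P}^0$ is a torus (connected abelian by Corollary~\ref{P0abelian}, compact by assumption), so the lift of $\mathrm{Ad}_p$ preserves $\ker(\exp|_{\mathfrak{p}^0})$ and is unimodular, while preservation of the full lattice $\gamma_0 \subset \R^q \oplus \mathfrak{p}^0$ forces total determinant $\pm 1$, whence $\lambda^q = 1$. All your intermediate steps check out: the lift of the conjugation to the universal cover is linear and block-diagonal, it preserves the preimage lattice because the covering homomorphism intertwines it with the action downstairs, the determinant is basis-independent so the $\Z$-basis and the split basis give the same value, and the degenerate case $\mathfrak{p}^0 = 0$ is covered by the same computation. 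Your route is essentially the determinant bookkeeping of Lemma~\ref{le1} specialized to the compact case, and it has the conceptual merit of exhibiting exactly why non-compactness is needed: the $\mathfrak{p}^0$-block must absorb the factor $\lambda^{-q}$, which is precisely the compensation $\rho(s) = \varphi(h)^{-1/q}$ used to define $L$, and this degenerates when ${\bar P}^0$ is a torus since then $\varphi \equiv 1$. The paper's argument buys brevity and avoids the passage to the universal cover and the torus-automorphism fact; both proofs rest on the same two inputs, namely the lattice $\Gamma_0$ from \cite[Lemma 4.18]{Kou} and the standing assumption that $\Gamma$ contains a strict similarity.
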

\begin{proof}
By contradiction, we assume that ${\bar P}^0$ is compact. We already noticed that the group $\Gamma_0 := \Gamma \cap(\R^q \times {\bar P}^0)$ (introduced in the paragraph following Corollary~\ref{P0abelian}) is a lattice in $\R^q \times {\bar P}^0$. By compactness of ${\bar P}^0$, the projection $\Gamma_0'$ of $\Gamma_0$ onto $\R^q$ is discrete because $\Gamma_0$ is discrete. Consequently, $\Gamma_0'$ is a lattice in $\R^q$, and it is preserved by the action of $\Gamma$ by conjugation on $\R^q$. This is possible only if all the similarities in $\Gamma\vert_{\R^q}$ are isometries. Otherwise, we could find non-zero vectors of arbitrarily small size inside $\Gamma_0'$. This is a contradiction  since we assumed that $\Gamma\vert_{\R^q}$ contains at least one similarity of ratio different from $1$.
\end{proof}

\begin{lemma} \label{FixedPoint}
All the elements of $\bar P$ fix a common point of $\partial \H^n$ and ${\bar P}^0$ does not contain hyperbolic isometries.
\end{lemma}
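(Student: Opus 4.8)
The plan is to combine three facts already available for the group $\bar P$ in this setting---that ${\bar P}^0$ is abelian (Corollary~\ref{P0abelian}), non-compact (Lemma~\ref{P0notcompact}), and normal in $\bar P$---with the volume distortion produced by the strict similarity contained in $\Gamma$, and to feed these into the rigidity of abelian isometry groups of a strictly negatively curved space. The first ingredient I would isolate is the following algebraic input. Since $M$ is a GIB manifold, there is an element $\gamma_0 = (s_0, p_0) \in \Gamma$ whose $\R^q$-part $s_0$ is a strict similarity of ratio $\lambda \neq 1$. I claim that then $\bigl|\det(\Ad_{p_0}|_{\mathfrak p^0})\bigr| = \lambda^{-q} \neq 1$. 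Indeed, $\Gamma_0 := \Gamma \cap (\R^q \times {\bar P}^0)$ is a lattice in the abelian group $\R^q \times {\bar P}^0$ (paragraph following Corollary~\ref{P0abelian} and \cite[Lemma 4.18]{Kou}), and its preimage in $\R^q \oplus \mathfrak p^0$ is a lattice preserved by $\Ad_{\gamma_0} = \mathrm{diag}(\lambda O_0, \Ad_{p_0}|_{\mathfrak p^0})$, where $\lambda O_0$ is the linear part of $s_0$. Hence $\det \Ad_{\gamma_0} = \pm 1$, which forces $\bigl|\det(\Ad_{p_0}|_{\mathfrak p^0})\bigr| = \lambda^{-q}$.

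Next I would rule out hyperbolic elements in ${\bar P}^0$. Suppose $h \in {\bar P}^0$ is hyperbolic, with axis $\ell$ and endpoints $\xi_\pm \in \partial \H^n$. Every element of the abelian group ${\bar P}^0$ commutes with $h$, hence preserves the pair $\{\xi_+, \xi_-\}$; since ${\bar P}^0$ is connected, the induced homomorphism ${\bar P}^0 \to \Z/2$ is trivial, so ${\bar P}^0$ fixes both $\xi_\pm$ and lies in the identity component $\R \times \SO(n-1)$ of $\Stab(\xi_+) \cap \Stab(\xi_-)$, the $\R$-factor being translation along $\ell$. As $h$ fixes only $\xi_\pm$ at infinity, one has $\mathrm{Fix}_\infty({\bar P}^0) = \{\xi_+, \xi_-\}$, so $p_0$, which normalizes ${\bar P}^0$, permutes $\{\xi_\pm\}$ and preserves $\ell$. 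Consequently $\Ad_{p_0}$ acts on $\mathfrak p^0 \subseteq \R \oplus \so(n-1)$ by $\pm 1$ on the translation line and by an automorphism of a compact torus (determinant $\pm 1$) on the remaining directions, so $\bigl|\det(\Ad_{p_0}|_{\mathfrak p^0})\bigr| = 1$, contradicting the first step. Therefore ${\bar P}^0$ contains no hyperbolic isometry, which is the second assertion of the lemma.

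Finally I would exhibit the common fixed point at infinity. A one-parameter subgroup $\exp(tX)$ of $\Isom(\H^n)$ consisting only of elliptic elements is relatively compact (its generator $X$ is then elliptic-semisimple, conjugate into $\so(n)$, by the Jordan decomposition in $\O(1,n)$). Since ${\bar P}^0 \cong \R^a \times T^b$ is non-compact it admits a one-parameter subgroup that is not relatively compact, which must therefore contain a non-elliptic element; having excluded hyperbolic elements, this element is parabolic and fixes a unique $\xi \in \partial \H^n$. By commutativity every element of ${\bar P}^0$ preserves $\{\xi\}$, so ${\bar P}^0$ fixes $\xi$, and since the parabolic fixes no other boundary point, $\mathrm{Fix}_\infty({\bar P}^0) = \{\xi\}$. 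As $\bar P$ normalizes ${\bar P}^0$, for any $g \in \bar P$ the point $g(\xi)$ is fixed by ${\bar P}^0$, whence $g(\xi) = \xi$; thus all of $\bar P$ fixes $\xi$.

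The step I expect to be the main obstacle is the volume-distortion argument and its use to exclude hyperbolic elements: one must verify that conjugation by $p_0$ genuinely scales volumes on $\mathfrak p^0$, whereas any element preserving a geodesic axis acts on the corresponding $\mathfrak p^0$ with $\bigl|\det\bigr| = 1$, so that the two evaluations of $\bigl|\det(\Ad_{p_0}|_{\mathfrak p^0})\bigr|$ are incompatible. Once this is in place, the remaining structure theory of $\Isom(\H^n)$ and the passage from ${\bar P}^0$ to $\bar P$ via normality are routine.
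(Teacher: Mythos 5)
Your proof is correct, and its two outer parts coincide with the paper's: the propagation of the parabolic fixed point to all of $\bar P$ via commutativity and normality of ${\bar P}^0$ is exactly the paper's argument, and your exclusion of the all-elliptic case via one-parameter subgroups is a minor repackaging of the paper's observation that a connected abelian group of elliptic isometries is compact, contradicting Lemma~\ref{P0notcompact}. Where you genuinely diverge is the exclusion of hyperbolic elements. The paper argues geometrically: if ${\bar P}^0$ contained a hyperbolic element, then (up to an index-two subgroup) all of $\bar P$ would fix both endpoints of its axis, hence lie in a conjugate of $\O(n-1) \rtimes \R^*_+$, which does not act cocompactly on $\H^n$ --- contradicting cocompactness of the $\bar P$-action. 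You instead run a unimodularity computation: the lattice $\Gamma_0$ in $\R^q \times {\bar P}^0$ forces $\bigl|\det(\Ad_{p_0}|_{\mathfrak{p}^0})\bigr| = \lambda^{-q} \neq 1$, while $p_0$, normalizing ${\bar P}^0$ inside the stabilizer of the axis, acts on $\mathfrak{p}^0 \subseteq \R \oplus \so(n-1)$ preserving a positive-definite inner product up to a sign on the translation line, so any invariant subspace yields determinant $\pm 1$ --- and this step does hold as you suspected, since the restriction of an orthogonal map to an invariant subspace is orthogonal. Amusingly, your determinant trick is precisely the one the paper itself deploys later, in Lemma~\ref{le1} of the Bieberbach section, so you have in effect imported that computation to do the work of cocompactness here; it even quantifies the distortion ($\varphi(p_0) = \lambda^{-q}$ is the relation defining the group $L$ there). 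The trade-off: the paper's geometric argument transfers verbatim to variable negative curvature (Lemma~\ref{FixedPointGeneral} reuses it, since the stabilizer of a geodesic keeps compact sets at bounded distance from it), whereas your computation leans on the explicit structure $\R \times \SO(n-1)$ of the two-point stabilizer in $\Isom(\H^n)$ and would need extra care in the general Hadamard setting of Section~\ref{GeneralNegativeCurvature}.
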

\begin{proof}
We look at the fixed points of the elements of ${\bar P}^0$ and we use the classification of isometries in hyperbolic spaces.

First, assume that there is a parabolic element $p_0$ in ${\bar P}^0$, i.e. $p_0$ fixes exactly one point $x \in \partial \H^n$ on the boundary. Since ${\bar P}^0$ is abelian, any element of ${\bar P}^0$ must fix $x$. Now, pick any element $p \in \bar P$. We know that ${\bar P}^0$ is a normal subgroup of $\bar P$, so $p^{-1} p_0 p$ is in ${\bar P}^0$ and $p^{-1} p_0 p x = x$, thus $p_0 p x = p x$. This implies $p x = x$.

We now assume that there is a hyperbolic element $p_0$ in ${\bar P}^0$, i.e. $p_0$ fixes exactly two points $x_1$ and $x_2$, lying on the boundary $\partial \H^n$. Again, since ${\bar P}^0$ is abelian and connected, any element of ${\bar P}^0$ fixes these two points. Let $p \in \bar P$. We have, as before, $p_0 p x_i = p x_i$ for $i = 1,2$. Up to taking a subgroup of $\Gamma$ of index $2$ in the construction, we can assume that $p x_i = x_i$. Up to conjugation, the group of isometries of $\H^n$ fixing two points of the boundary is $\O(n-1) \rtimes \R^*_+$ when using the model $\H^n \simeq (\R^{n-1} \times \R^*_+, \frac{1}{x_n^2} (dx_1^2 + \ldots + dx_n^2))$. But this group does not act cocompactly on $\H^n$, so this case is impossible.

The last remaining case is when ${\bar P}^0$ contains only elliptic elements, i.e. all elements have a fixed point in $\H^n$. In that situation, each element of ${\bar P}^0$ lies in the isotropy group of a point, which is compact. Since ${\bar P}^0$ is abelian and connected, and therefore the product of a torus with $\R^m$ for some $m$, one has that $m = 0$ and ${\bar P}^0$ is compact, because otherwise some elements would not be contained in a compact subgroup. This case is impossible because of Lemma~\ref{P0notcompact}.
\end{proof}

By Lemma~\ref{FixedPoint}, we may assume that $\bar P$ fixes the point $\infty$ in $\partial \H^n$, using the upper half-space model $\H^n \simeq (\R^{n-1} \times \R^*_+, \frac{1}{x_n^2} (dx_1^2 + \ldots + dx_n^2))$. The isometries of $\Isom({\H^n})$ fixing $\infty$ lie in the group $\Isom(\R^{n-1}) \rtimes \R^*_+$, where $\Isom(\R^{n-1})$ acts on the first factor of $\R^{n-1} \times \R^*_+$, and $\R^*_+$ denotes the group of positive scalar $n \times n$ matrices.

Since$\bar P$ acts cocompactly on $\H^n$, consider the group $\mathcal H := \bar P \cap \Isom(\R^{n-1})$. We then have a decomposition:
\[
\H^n / \bar P \simeq (\H^n / \mathcal H) / (\bar P / \mathcal H).
\]
Moreover, $(\H^n / \mathcal H) \simeq (\R^{n-1} / \mathcal H) \times \R^*_+$, and the quotient $\bar P / \mathcal H$ acts freely on the factor $\R^*_+$. Indeed, if $\bar\gamma \in \bar P / \mathcal H \subset \R^*_+$ has a fixed point in $\R_+^*$, then it must be the identity. Consequently, $\H^n / \bar P$ is compact only if $\R^{n-1} / \mathcal H$ is compact. 

Since $\bar P$ acts cocompactly on $\H^n$, it must contains a non-trivial similarity $p$ of $\R^n$ with ratio $0 < \lambda < 1$. The map $p$ restricts to a similarity $\bar p$ of $\R^{n-1}$ and this restriction acts by conjugation on $\mathcal H$, because $\mathcal H$ is a normal subgroup of $\bar P$. Thus, if $K$ is a compact subset of $\R^{n-1}$ such that $\mathcal H \cdot K = \R^{n-1}$, one has
\[
\mathcal H \cdot \bar p (K) = \bar p \mathcal H \bar p^{-1} \cdot \bar p (K) = \bar p \mathcal H \cdot K = \bar p (\R^{n-1}) = \R^{n-1}.
\]
The diameter of $\bar p (K)$ is $\lambda$ times the diameter of $K$ (for any fixed Euclidean norm on $\R^{n-1}$). Iterating this process shows that we may take the compact $K$ arbitrarily small. It follows that $\mathcal H \cdot y$ is dense in $\R^{n-1}$ for every $y \in \R^{n-1}$. Since the action of $\mathcal H$ is proper, this implies that $\mathcal H \cdot y = \R^{n-1}$.

Thus $\mathcal H$ acts transitively on $\R^{n-1}$. It follows that the connected component of the identity in $\mathcal H$ already acts transitively on $\R^{n-1}$. The group ${\bar P}^0$ is contained in $\Isom(\R^{n-1})$, because ${\bar P}^0$ contains only parabolic elements, and therefore ${\bar P}^0$ is the identity connected component of $\mathcal H = \bar P \cap \Isom(\R^{n-1})$. In particular, ${\bar P}^0$ it acts transitively on $\R^{n-1}$. In order to conclude that ${\bar P}^0 = \R^{n-1}$, we need to prove:

\begin{lemma}
Let $m > 0$ be an integer. The only connected abelian subgroup of $\Isom(\R^m)$ acting transitively on $\R^m$ is the group of translations $\R^m$.
\end{lemma}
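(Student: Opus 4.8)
The plan is to first upgrade transitivity to \emph{simple} transitivity, and then to analyse the Lie algebra $\g \subset \so(m) \ltimes \R^m$ of $G$ by playing skew-symmetry of the rotational part against commutativity. First I would observe that, since $G$ is abelian and acts faithfully and transitively, all point-stabilizers coincide: the stabilizer of $g\cdot x$ is $g (\Stab x) g^{-1} = \Stab x$ by commutativity, so the common stabilizer fixes every point and is trivial by faithfulness. Hence $G$ acts simply transitively, the orbit map $G \to \R^m$, $g \mapsto g\cdot 0$, is a diffeomorphism, and $\dim G = m$. Writing an element of $\Isom(\R^m)$ as $x \mapsto Rx + v$ identifies $\Lie(\Isom(\R^m)) = \so(m)\ltimes\R^m$, acting by the Killing fields $x \mapsto Ax+b$. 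The differential of the orbit map at the identity sends $(A,b)\in\g$ to $b$, so it is the linear isomorphism $\g \to \R^m$, $(A,b)\mapsto b$; consequently there is a well-defined linear map $b \mapsto A(b) \in \so(m)$ with $(A(b),b)\in\g$ for every $b\in\R^m$.

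Next I would exploit commutativity. The $\R^m$-component of the bracket of $(A,u)$ and $(A',u')$ in $\so(m)\ltimes\R^m$ is $A'u - Au'$, so, $\g$ being abelian, $A(b')\,b = A(b)\,b'$ for all $b,b'$. Introducing the trilinear form $\Phi(x,y,z) := \langle A(x)y, z\rangle$, this relation says exactly that $\Phi$ is symmetric in its first two arguments, while skew-symmetry of $A(x)\in\so(m)$ (i.e. $\langle A(x)y,z\rangle = -\langle A(x)z,y\rangle$) says that $\Phi$ is antisymmetric in its last two. A short sign chase then shows that any trilinear form which is symmetric in one adjacent pair and antisymmetric in the overlapping adjacent pair must vanish: $\Phi(x,y,z) = \Phi(y,x,z) = -\Phi(y,z,x) = -\Phi(z,y,x) = \Phi(z,x,y) = \Phi(x,z,y) = -\Phi(x,y,z)$, so $\Phi \equiv 0$. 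Hence $\langle A(x)y,z\rangle = 0$ for all $x,y,z$, that is $A(x)=0$ for every $x$.

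It follows that $\g = \{(0,b) : b\in\R^m\}$ is the algebra of translations, and since $G$ is connected, $G$ is exactly the translation group $\R^m$, as claimed. All the computations are elementary; the only genuinely delicate point is the opening reduction, namely ensuring that $G$ is a Lie subgroup (so that $\g$ exists as a subalgebra of $\so(m)\ltimes\R^m$) and that transitivity together with faithfulness upgrades to a free, hence simply transitive, action with the orbit map a diffeomorphism. In the application this is immediate, since $G = {\bar P}^0$ is a closed connected subgroup of $\Isom(\R^{n-1})$. The combinatorial vanishing of $\Phi$ is the conceptual heart of the argument, but it becomes automatic once the two symmetry properties are isolated.
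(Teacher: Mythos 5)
Your proof is correct, and it takes a genuinely different route from the paper's. The paper argues entirely at the group level, with no Lie algebra: connectedness puts the linear parts of $G$ into a connected abelian subgroup of $\SO(m)$, whose closure is a torus, so after conjugation $G$ sits inside a product of planar motion groups $E^+(2)$ (times a line of translations when $m$ is odd); projecting onto each factor reduces the lemma to $m=2$, where commuting a nontrivial rotation fixing the origin with elements of $G$ realizing all translation parts forces $Rt'=t'$ for every $t'\in\R^2$, hence $R=\mathrm{Id}$. You instead first upgrade transitivity to simple transitivity (the stabilizers of a transitive abelian action all coincide, hence fix every point, hence are trivial), identify $\g\subset\so(m)\ltimes\R^m$ with $\R^m$ via the orbit map, and kill the rotational component $b\mapsto A(b)$ by the classical sign-chase: $\Phi(x,y,z)=\langle A(x)y,z\rangle$ is symmetric in $(x,y)$ by commutativity of $\g$ and antisymmetric in $(y,z)$ by skew-symmetry of $A(x)$, hence vanishes --- the same trilinear trick as in the uniqueness of the Levi-Civita connection, and in substance the standard argument that a simply transitive abelian isometry group of Euclidean space is the translation group. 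Your sign chain is correct, as is the freeness step, and the diffeomorphism property of the orbit map is standard for smooth transitive actions of second-countable Lie groups. What your approach buys: it is coordinate-free, avoids the reduction to $m=2$, and yields the sharper infinitesimal statement that an abelian subalgebra of $\e(m)$ projecting isomorphically onto $\R^m$ consists of translations. What the paper's approach buys: it never needs $G$ to be an (immersed) Lie subgroup --- it manipulates individual elements and the closure of the linear-part projection, so it applies to an arbitrary connected subgroup --- whereas your argument requires $\g$ to exist; as you correctly flag, this is guaranteed for path-connected subgroups (Yamabe) and is immediate in the paper's application, where $G={\bar P}^0$ is closed in $\Isom(\R^{n-1})$.
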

\begin{proof}
Let $G$ be such a subgroup of $\Isom(\R^m)$. Any element of $G$ is of the form $x \mapsto R x + t$ where $(R,t) \in \SO(n) \times \R^m$. Since $G$ is abelian, the linear parts of the elements of $G$ commute, so they are all contained in a maximal torus of $\SO(n)$. Thus, up to conjugation, we may assume that the linear parts of the elements of $G$ are contained in the product group $\prod_{i = 1}^{m/2} \O(2)$ if $m$ is even or $\prod_{i = 1}^{m/2} \O(2) \times \{1\}$ if $m$ is odd. Consequently, $G$ is a subgroup of $\prod_{i = 1}^{m/2} E^+(2)$ if $m$ is even or $\prod_{i = 1}^{m/2} E^+(2) \times \R$ if $m$ is odd, where $E^+(2)$ is the group of direct isometries of $\R^2$ and $\R$ acts by translations on $\R$. The projection of $G$ onto each one of the $E^+(2)$ factors is abelian and acts transitively on $\R^2$, because $G$ acts transitively on $\R^m$. It is then sufficient to prove the lemma in the case $m = 2$.

We assume that $m = 2$. Let $I : x \mapsto R x + t$ be an element of $G$. If $R$ is the identity, the isometry is a translation. If $R$ is a non-trivial rotation, then, up to applying a translation, we may assume that $0_{\R^2}$ is a fixed point of $I$, thus $t = 0$. Now, since $G$ acts transitively on $\R^2$, for any $t' \in \R^2$ there exists $R' \in \SO(2)$ such that the map $x \mapsto R' x + t'$ is in $G$. But the isometry $x \mapsto R x$ commute with this map. We obtain $R t' = t'$, and since this is true for any $t' \in \R^2$, the rotation $R$ is the identity.

It follows that $G$ contains only translations, and it contains all of them because it acts transitively on $\R^m$.
\end{proof}

We proved the following:

\begin{proposition}
If $(N, g_N) = \H^n \simeq (\R^{n-1} \times \R^*_+, \frac{1}{x_n^2} (dx_1^2 + \ldots + dx_n^2))$, then, up to conjugation, the group $\bar P$ preserves the point $\infty$ in $\partial \H^n$, and ${\bar P}^0 = \R^{n-1}$.
\end{proposition}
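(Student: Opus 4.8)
The plan is to assemble the structural facts already proved about $\bar P$ and ${\bar P}^0$ into a description of their action on $\H^n$. There are two things to show: that $\bar P$ fixes a single point of $\partial \H^n$, which we may normalize to $\infty$, and that ${\bar P}^0$ is precisely the translation subgroup $\R^{n-1}$ in the upper half-space model.

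First I would record that ${\bar P}^0$ is non-compact (Lemma~\ref{P0notcompact}): since $\Gamma_0 = \Gamma \cap (\R^q \times {\bar P}^0)$ is a lattice in $\R^q \times {\bar P}^0$, compactness of ${\bar P}^0$ would make the projection of $\Gamma_0$ to $\R^q$ a lattice invariant under the conjugation action of $\Gamma$, which is incompatible with a strict similarity in $\Gamma|_{\R^q}$ as it would produce arbitrarily short lattice vectors. Next I would apply the trichotomy of isometries of $\H^n$ to the connected abelian group ${\bar P}^0$ (abelian by Corollary~\ref{P0abelian}), as in Lemma~\ref{FixedPoint}. A hyperbolic element would force all of ${\bar P}^0$, by commutativity and connectedness, to fix the same two boundary points, confining $\bar P$ (after passing to an index-two subgroup) to the two-point stabilizer $\O(n-1) \rtimes \R^*_+$, which does not act cocompactly on $\H^n$; containing only elliptic elements would force ${\bar P}^0$ into compact isotropy groups, hence compact, contradicting the first point. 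Thus ${\bar P}^0$ contains a parabolic element, commutativity pins ${\bar P}^0$ to one boundary fixed point, and normality of ${\bar P}^0$ in $\bar P$ spreads this fixed point to all of $\bar P$. Conjugating it to $\infty$ proves the first assertion and embeds $\bar P$ in $\Isom(\R^{n-1}) \rtimes \R^*_+$.

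With $\bar P$ fixing $\infty$ I would set $\mathcal H := \bar P \cap \Isom(\R^{n-1})$ and show it acts transitively on $\R^{n-1}$. The splitting $\H^n/\mathcal H \simeq (\R^{n-1}/\mathcal H) \times \R^*_+$, together with the free action of $\bar P/\mathcal H$ on the $\R^*_+$-factor, reduces cocompactness of $\bar P$ to compactness of $\R^{n-1}/\mathcal H$; the contraction supplied by a strict similarity $p \in \bar P$ of ratio $\lambda < 1$, which conjugates $\mathcal H$ and shrinks a compact fundamental domain by $\lambda$, then shows each orbit $\mathcal H \cdot y$ is dense and hence, by properness, equal to $\R^{n-1}$. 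Since ${\bar P}^0$ consists only of parabolic elements it lies in $\Isom(\R^{n-1})$ and is the identity component of $\mathcal H$, so it too acts transitively. Invoking the lemma that the only connected abelian subgroup of $\Isom(\R^{n-1})$ acting transitively is the full translation group, I conclude ${\bar P}^0 = \R^{n-1}$.

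The main obstacle is the case analysis of the second step, namely excluding the hyperbolic and elliptic possibilities for ${\bar P}^0$. The elliptic exclusion rests entirely on the non-compactness established at the outset, while the hyperbolic exclusion needs the genuinely hyperbolic fact that the two-point stabilizer $\O(n-1) \rtimes \R^*_+$ cannot act cocompactly: it preserves the geodesic joining its two fixed boundary points, hence the unbounded distance-to-that-geodesic function, so its orbits remain within equidistant tubes. The other delicate point is the density argument in the third step, which depends essentially on the contracting strict similarity to overcome the discreteness one would otherwise expect of $\mathcal H$.
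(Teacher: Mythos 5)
Your proposal is correct and takes essentially the same route as the paper's own proof: non-compactness of ${\bar P}^0$, the elliptic/hyperbolic/parabolic trichotomy (with the same exclusions: the two-point stabilizer $\O(n-1) \rtimes \R^*_+$ is not cocompact, and elliptic-only would force compactness) yielding a common fixed point normalized to $\infty$, then the reduction via $\mathcal H = \bar P \cap \Isom(\R^{n-1})$, the contracting strict similarity giving dense hence (by properness) full orbits, and the final lemma identifying the only connected abelian transitive subgroup of $\Isom(\R^{n-1})$ with the translations. No gaps; the differences are purely presentational.
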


We now assume that $\R^q \times N = \R^q \times \H^n$. We know that the subgroup $\Gamma_0$ of $\Gamma$ defined in Theorem~\ref{Equivalence} is a lattice in $\R^q \times {\bar P}^0 \simeq \R^q \times \R^{n-1}$. Let $\gamma \in \Gamma$ which projects to a non-isometric similarity on $\Sim(\R^q)$. The matrix of the action of $\gamma$ by conjugation of $\R^q \times \R^{n-1}$ is an element $A$ of $\GL_{q+n-1} (\Z)$ when seen in a basis of the lattice $\Gamma_0$. Moreover, $A$ restricts to $E := \R^q$ and to $F := \R^{n-1}$ and is a similarity of ratio $\lambda$ on $E$ and a similarity of ratio $\lambda^{-q/(n-1)}$ on $F$. This proves Claim~\ref{matrixExistence}.

\subsection{The general case of negative curvature} \label{GeneralNegativeCurvature}

In this section, we investigate a more general framework, following an analysis similar to that of the previous section. We assume that the manifold $N^n$ has negative sectional curvature, and since $\bar P$ acts cocompactly on $N$ by isometries, this implies that the curvature is bounded from above by a strictly negative constant.

We can prove the same result as in Lemma~\ref{FixedPoint} for the hyperbolic space:

\begin{lemma} \label{FixedPointGeneral}
All the elements of $\bar P$ fix a common point of $\partial N$ and ${\bar P}^0$ does not contain hyperbolic isometries.
\end{lemma}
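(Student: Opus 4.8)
The plan is to reproduce the trichotomy of Lemma~\ref{FixedPoint} essentially verbatim, replacing only the one step that used the explicit matrix description of $\Isom(\H^n)$ by a coordinate-free argument valid on any pinched Hadamard manifold. Throughout I would use that ${\bar P}^0$ is abelian (Corollary~\ref{P0abelian}) and non-compact (Lemma~\ref{P0notcompact}), that ${\bar P}^0$ is normal in $\bar P$ as its identity component, that $\bar P$ acts properly and cocompactly on $N$, and the classification recalled at the start of Section~\ref{Section3}: a parabolic isometry fixes a unique point of $\partial N$ and a hyperbolic one exactly two. I would split the proof according to which type of element ${\bar P}^0$ contains, establishing the no-hyperbolic assertion first and the common fixed point last.

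I expect the hyperbolic case to be the main obstacle, since the hyperbolic argument in Lemma~\ref{FixedPoint} identified the stabilizer of two boundary points with $\O(n-1) \rtimes \R^*_+$, which is not available here. My substitute would be an invariant-function argument. Suppose $p_0 \in {\bar P}^0$ is hyperbolic with fixed points $x_1, x_2 \in \partial N$. Since ${\bar P}^0$ is abelian, every $q \in {\bar P}^0$ commutes with $p_0$ and so preserves $\mathrm{Fix}(p_0) = \{x_1, x_2\}$; by connectedness of ${\bar P}^0$ it fixes each $x_i$. For arbitrary $p \in \bar P$, normality gives that $p^{-1} p_0 p \in {\bar P}^0$ is hyperbolic with fixed-point set $\{p^{-1} x_1, p^{-1} x_2\}$ and also fixes $x_1, x_2$; comparing these two two-element sets shows that $\bar P$ preserves the pair $\{x_1, x_2\}$, hence preserves the unique geodesic $\gamma$ joining them. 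Then $x \mapsto d_N(x, \gamma)$ is a continuous $\bar P$-invariant function, so it descends to the compact quotient $N/\bar P$ and is therefore bounded; this contradicts the fact that the distance to a geodesic is unbounded on a Hadamard manifold of dimension $\ge 2$. Hence ${\bar P}^0$ contains no hyperbolic element, which is the second assertion.

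Next I would rule out the purely elliptic situation exactly as in Lemma~\ref{FixedPoint}: if every element of ${\bar P}^0$ were elliptic, each $g \in {\bar P}^0$ would lie in a compact isotropy subgroup, so $\overline{\langle g \rangle}$ would be compact; writing ${\bar P}^0 \cong \T^k \times \R^m$, an element with non-trivial $\R^m$-component would generate a non-relatively-compact subgroup, forcing $m = 0$ and making ${\bar P}^0$ a torus, against Lemma~\ref{P0notcompact}. Thus ${\bar P}^0$ contains a non-elliptic element, and since it has no hyperbolic element, it contains a parabolic element $p_0$, fixing a unique point $x \in \partial N$.

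Finally I would deduce the common fixed point. As ${\bar P}^0$ is abelian, every element commutes with $p_0$ and hence preserves the singleton $\mathrm{Fix}(p_0) \cap \partial N = \{x\}$, so all of ${\bar P}^0$ fixes $x$. For any $p \in \bar P$, normality gives $p^{-1} p_0 p \in {\bar P}^0$, whence $p^{-1} p_0 p$ fixes $x$; rearranging, $p_0(px) = px$, so $px$ is a fixed point of $p_0$, and by uniqueness $px = x$. Therefore every element of $\bar P$ fixes $x$, which is the first assertion. The only genuinely new ingredient compared with Lemma~\ref{FixedPoint} is the invariant-distance argument excluding hyperbolic elements; the elliptic and parabolic steps transfer without change.
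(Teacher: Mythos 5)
Your proposal is correct and takes essentially the same approach as the paper: the elliptic and parabolic steps are imported verbatim from Lemma~\ref{FixedPoint}, and your invariant-function argument (boundedness of $x \mapsto d_N(x,\gamma)$ on the compact quotient) is just a reformulation of the paper's observation that a group preserving a geodesic keeps the image of any compact set at bounded distance from it and hence cannot act cocompactly. The only cosmetic difference is that you avoid the paper's passage to an index-two subgroup by noting that preserving the unordered pair $\{x_1, x_2\}$ already suffices to preserve the geodesic.
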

\begin{proof}
The proof is the same as for Lemma~\ref{FixedPoint}, except for the part where we show that ${\bar P}^0$ does not contain hyperbolic isometries. Suppose that there exists a hyperbolic element $p_0 \in {\bar P}^0$ fixing two points $x_1$ and $x_2$ on the boundary of $N$. As in the proof of Lemma~\ref{FixedPoint}, we can show that, up to taking a finite-index subgroup, all the elements in $\bar P$ fix $x_1$ and $x_2$, so they preserve the geodesic between these two points. However, the group of isometries that preserves a geodesic in $N$ does not act cocompactly on $N$, because the image of a compact by this group stays at bounded distance from the geodesic.
\end{proof}

By Lemma~\ref{FixedPointGeneral}, the group $\bar P$ fixes a point $x \in \partial N$. Let $H$ be a horosphere centered at $x$ and let $f$ be a Busemann function at $x$. For any $y \in N$, there is a unique geodesic $\gamma_y$ joining $y$ and $x$, and $H \cap \gamma_y$ is reduced to a point denoted by $\eta(y)$. The map $\eta \times f : N \to \R \times H$, $y \mapsto (\eta(y), f(y))$ is a homeomorphism \cite[Proposition 11]{Shiga}. Let $\mathcal H$ be the stabilizer of $H$ in $\bar P$, so $\mathcal H$ contains no hyperbolic isometry. This is a normal subgroup of $\bar P$, thus
\[
N / \bar P \simeq (N/\mathcal H)/(\bar P/\mathcal H).
\]
In addition, it follows from the existence of the homeomorphism $N \simeq \R \times H$, that $N/\mathcal H \simeq \R \times H/\mathcal H$. The group $\bar P / \mathcal H$ acts freely on the factor $\R$, because if $p \in \bar P$ stabilizes a horosphere centered at $x$, it stabilize all the horospheres centered at $x$, and therefore $p$ lies in $\mathcal H$. Since $N/\bar P$ is compact, we deduce that $H/\mathcal H$ is compact.

Following the analysis of the hyperbolic space, we want to prove that $H/\mathcal H$ is a point. Since $\bar P$ acts cocompactly on $N$, the group $\bar P/G$ is non-trivial, and there exists a hyperbolic isometry $p$ in $\bar P$. This isometry preserves the horospheres centered at $x$, and using again the homeomorphism $N \simeq \R \times H$ it can be written as $p =: (p_1, p_2)$ where $p_1 : \R \to \R$ and $p_2 : H \to H$. We know that $H$ is a $C^2$-hypersurface in $N$ and $\mathcal H$ is a subgroup of the isometries of $H$. In order to prove that $H/\mathcal H$ is a point, it is sufficient to prove that $p_2$ contracts the distances and to proceed as in the case of the hyperbolic space. Up to taking $p^{-1}$ instead of $p$, we can assume that $x$ is the repulsive point of the hyperbolic isometry $p$.

\begin{lemma} \label{Contraction}
For any $a \in H$ and $X \in T_a H$, one has $\Vert d_a p_2 (X) \Vert < \Vert X \Vert$.
\end{lemma}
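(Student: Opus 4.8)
The plan is to make the decomposition $p = (p_1, p_2)$ explicit in terms of the geodesic flow towards $x$, and then to reduce the statement to the standard fact that this flow contracts the induced metric on horospheres in strictly negative curvature. First I would normalise the Busemann function $f$ so that $f \to +\infty$ towards $x$, and introduce the flow $\psi_t$ of $\nabla f$: by construction $\psi_t$ increases $f$ at unit rate, its orbits are exactly the geodesics converging to $x$ (traversed towards $x$ as $t \to +\infty$), and it carries the horosphere $\{f = s\}$ onto $\{f = s+t\}$. In these terms $\eta(y) = \psi_{-f(y)}(y) \in H$ is precisely the projection onto $H = \{f = 0\}$ along the geodesic to $x$, and the identification $N \simeq \R \times H$ is $y \mapsto (f(y), \eta(y))$.

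Next I would pin down the dynamics. Since $p$ fixes $x$, the difference $f \circ p - f$ is constant; because $x$ is the \emph{repulsive} fixed point, $p$ pushes points away from $x$ and hence decreases $f$, so that $f \circ p = f - \ell$ with $\ell > 0$ equal to the translation length of $p$. Consequently $p$ maps $H = \{f = 0\}$ onto the horosphere $\{f = -\ell\}$, and for $a \in H$ one computes $p_2(a) = \eta(p(a)) = \psi_{\ell}(p(a))$. This would exhibit $p_2 = \psi_\ell \circ (p\vert_H)$ as a composition of two maps between horospheres: the restriction $p\vert_H : H \to \{f = -\ell\}$, which is an isometry for the induced metrics (being the restriction of the ambient isometry $p$), followed by the flow $\psi_\ell : \{f = -\ell\} \to H$ of positive time, which moves towards $x$. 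Differentiating gives $d_a p_2 = d\psi_\ell \circ d(p\vert_H)$, where $d(p\vert_H)$ preserves norms, so that all of the contraction must come from $d\psi_\ell$.

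It then remains to prove that $\psi_t$ strictly contracts the horosphere metric for $t > 0$. For $X \in T_a H$ the field $J(t) := d\psi_t(X)$ is the stable Jacobi field along $t \mapsto \psi_t(a)$ with $J(0) = X$; since $\psi_t$ is the flow of the gradient field $\nabla f$ one has $\tfrac{D}{dt} J = \nabla_J \nabla f = S(J)$, where $S = \mathrm{Hess}\, f$ is the shape operator of the horospheres with respect to the normal $\nabla f$ pointing towards $x$. Hence $\tfrac{d}{dt}\Vert J(t)\Vert^2 = 2\, \mathrm{Hess}\, f(J, J)$, and the decisive input is the \emph{sign} of this quantity: in strictly negative curvature $\mathrm{Hess}\, f$ is negative definite on horospheres, so $\Vert J(t)\Vert$ is strictly decreasing, and in fact the Riccati comparison along geodesics to $x$ yields $\Vert J(t)\Vert \le e^{-\kappa t}\Vert X\Vert$ when $K \le -\kappa^2 < 0$ (see \cite{BGS}). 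Applying this with $t = \ell > 0$ and combining with the isometry $p\vert_H$ would give $\Vert d_a p_2(X)\Vert \le e^{-\kappa \ell}\Vert X\Vert < \Vert X\Vert$, as desired. The main obstacle I anticipate is precisely this sign bookkeeping: one must verify that the normal pointing towards the repulsive point $x$ makes the horospheres strictly concave, so that the flow \emph{towards} $x$ contracts rather than expands. This is the infinitesimal counterpart of the explicit half-space computation of Section~\ref{HyperbolicSpace}, where $p_2$ acts as $a \mapsto \lambda a$ with $\lambda < 1$, and matching the two should serve as a reliable consistency check.
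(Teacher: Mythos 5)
Your proof is correct, and its skeleton is the same as the paper's: you factor $p_2 = \psi_\ell \circ (p\vert_H)$, where $p\vert_H$ is an isometry onto the horosphere $\{f = -\ell\}$ and the sliding $\psi_\ell$ toward $x$ carries all the contraction --- this is exactly the paper's reduction (``$p_2$ is $p$ followed by sliding along geodesics to $x$''), and your sign bookkeeping (repulsivity of $x$ forces $\ell > 0$, so one flows \emph{toward} $x$, which contracts) is right and matches the half-space model of Section~\ref{HyperbolicSpace}. Where you genuinely diverge is in how the contraction of the sliding is proved. The paper stays at the level of Jacobi fields along the variation through geodesics asymptotic to $x$: the bound $K \le -k < 0$ makes $\Vert J \Vert$ strictly convex, $J$ is bounded as $u \to +\infty$ because the geodesics converge at $x$, and a positive, bounded, strictly convex function on $[0,+\infty)$ is strictly decreasing; this gives the strict inequality with no rate, and --- importantly --- without ever differentiating the Busemann function. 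You instead differentiate the flow, writing $\tfrac{d}{dt}\Vert J\Vert^2 = 2\,\mathrm{Hess}\,f(J,J)$ and invoking the Riccati comparison $\mathrm{Hess}\,f \le -\kappa$ on horosphere tangent vectors under $K \le -\kappa^2$; this buys a quantitative rate $\Vert d_a p_2(X)\Vert \le e^{-\kappa\ell}\Vert X\Vert$, but it costs $C^2$ regularity of $f$: on a general Hadamard manifold with only an \emph{upper} curvature bound, Busemann functions are merely $C^{1,1}$, and $C^2$ regularity (hence a genuine $\mathrm{Hess}\,f$) requires a two-sided curvature bound, by a theorem of Heintze and Im Hof. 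In the present setting this is harmless, since $\bar P$ acts cocompactly on $N$, so the curvature is pinched between two negative constants and $f$ is $C^2$ --- consistent with the paper's own assertion that $H$ is a $C^2$ hypersurface --- but you should say this explicitly, as it is the one place where your argument needs strictly more than the paper's soft convexity-plus-boundedness argument.
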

\begin{proof}
Let $a \in H$ and let $X \in T_a H \setminus \{0 \}$. We observe that $p_2$ can be viewed as the composition of $p$ together with the sliding along geodesics joining points of $p(H)$ to the fixed point $x \in \partial N$ until we reach $H$. Consequently, if we take an injective curve $c : (-\epsilon, \epsilon) \to H$ such that $c(0) = a$ and $c'(0) = X$, then $p \circ c$ is a curve on $p(H)$, and sliding along the geodesics induces a map
\begin{equation}
r : (-\epsilon, \epsilon) \times [0, +\infty) \to N, \ (t,u) \mapsto \gamma_{c(t)} (u),
\end{equation}
where $\gamma_y$ is the unit speed geodesic joining $y$ and $x$ for a given $y \in N$. If $\alpha$ is the distance between $p(H)$ and $H$, then $r(\cdot, \alpha)$ is a curve on $H$.

One has $d_a p_2(X) = d_{(0,\alpha)} r (\partial / \partial t)$ and we remark that $J(u) := d_{(0,u)} r (\partial / \partial t)$ is a Jacobi field along the geodesic $\gamma := r(0,\cdot)$. Denoting by $R$ the Riemann curvature tensor of $N$, and since there is a constant $k > 0$ such that the sectional curvature of $N$ is bounded from above by $-k$, one has, following the same computations as in \cite[Lemma IV.2.2]{Bal95}:
\begin{align*}
\Vert J \Vert'' &= \frac{1}{\Vert J \Vert^3} (- \langle R(J, \dot \gamma), \dot \gamma, J \rangle \Vert J \Vert^2 + \Vert J' \Vert^2 \Vert J \Vert^2 - \langle J', J \rangle^2) \\
&\ge - \frac{1}{\Vert J \Vert} \langle R(J, \dot \gamma), \dot \gamma, J \rangle \ge \frac{k}{\Vert J \Vert} (\Vert J \Vert^2 - \langle J, \dot \gamma \rangle^2).
\end{align*}
We know that $J(0) = d_a p(X) \neq 0$ and $J$ never vanishes because the family of geodesics $\gamma_{c(\cdot)}$ meet at the point $x \in \partial N$. In addition, $J$ is never tangent to $\gamma$ because for any $u$, the curve $r(\cdot, u)$ is injective on a horosphere transverse to $\gamma$. We conclude that for any $u \in [0,\alpha]$ one has $\Vert J \Vert (u)  > 0$ and $\Vert J \Vert$ is a strictly convex function.

The geodesics $\gamma_{c(\cdot)}$ all meet at the repulsive point of $p$, namely $x \in \partial N$, thus $\lim_{u \to +\infty} J(u) < + \infty$, and $\Vert J \Vert$ is a strictly decreasing function. Hence, we have
\[
\Vert X \Vert = \Vert d_a p(X) \Vert = \Vert J(0) \Vert > \Vert J(\alpha) \Vert = \Vert d_a p_2(X) \Vert,
\]
which concludes the proof.
\end{proof}

The quotient $H/\mathcal H$ is compact and, by Lemma~\ref{Contraction}, $H$ admits a contraction mapping $p_2$ that acts on $\mathcal H$ by conjugation. Proceeding as in the hyperbolic space case, we choose a compact subset $K$ of $H$ such that $\mathcal H \cdot K = H$. Then
\begin{equation}
\mathcal H \cdot p_2(K) = H,
\end{equation}
so $K$ may be chosen with arbitrarily small diameter. Since $\mathcal H$ acts properly, this implies that $\mathcal H$ acts transitively on $H$. Consequently, the connected component of the identity of $\mathcal H$, namely ${\bar P}^0 \cap \mathcal H = {\bar P}^0$, acts transitively on $H$.

We know that ${\bar P}^0$ is an abelian group of isometries, and $H$ is simply connected. It follows that $H$ is a flat manifold, isometric to $\R^p$ for some $p > 0$, and that it is a smooth submanifold of $N$. The identification $N \simeq \R \times H$ is therefore a diffeomorphism. 

More can be said. Let $\gamma$ be the axis of the hyperbolic isometry $p$. Then ${\bar P}^0 \gamma = N$, and the metric along $\gamma$ determines completely the metric on $N$. The geodesic $\gamma$ is orthogonal to $H$, so the projection $N \to N/{\bar P}^0$ is a Riemannian submersion. Moreover, the same argument as in Section~\ref{HyperbolicSpace} shows that ${\bar P^0}$ is exactly the group of translations of the factor $\R^{n-1}$.

Thus, the Riemannian manifold $N$ is isometric to $(\R \times \R^{n-1}, dt^2 + \langle S_t \cdot, S_t \cdot \rangle)$ where $S_t \in \GL(\R^{n-1})$ and $\langle \cdot, \cdot \rangle$ is the standard scalar product on $\R^{n-1}$. Up to a linear transformation of the factor $\R^{n-1}$, we may assume that $S_0 = \mathrm{Id}$.

We now assume that $N$ is a homogeneous manifold. By a result of Chen \cite[Theorem 4.1]{Chen}, either $N$ is a symmetric space of rank $1$ or $\Isom(N)$ fixes a point of $\partial N$. Assume first that $N$ is not a symmetric space. Then, the entire isometry group fixes $x \in \partial N$, since this point is already the unique fixed point of the group ${\bar P}^0$.

The subgroup of $\Isom(N)$ preserving the axis $\gamma$ is closed because $\gamma$ is a closed subset of $N$. By transitivity of the action of $\Isom(N)$, this subgroup contains uncountably many elements acting freely on $\gamma$; hence, it is non-discrete and its identity component contains a one-parameter subgroup $\varphi_s$ acting by translations along $\gamma$.

Using the diffeomorphism $N \simeq \R \times \R^{n-1}$, under which the geodesic $\gamma$ corresponds to $\R \times \{0\}$, this one-parameter subgroup can be written in the form $\varphi_s : (t, a) \mapsto (t+s, A_s a)$, where $A_s$ is a linear endomorphism of $\R^{n-1}$. Indeed, $\varphi_s$ preserves the horospheres centered at $x$ and permutes the geodesic rays pointing toward $x$. The map $s \mapsto A_s$ is a one-parameter group of invertible matrices, therefore $A_s = e^{- s A}$ for some matrix $A$. Since $\varphi_s$ is a one-parameter group of isometries, for any $X,Y \in T \R^{n-1}$ and any $t \in \R$ we obtain
\begin{align}
\langle S_t X, S_t Y \rangle = \langle S_0 A_{-t} X, S_0 A_{-t} Y \rangle = \langle e^{t A} X, e^{t A} Y \rangle.
\end{align}

It remains to consider the case where $N$ is a symmetric space of rank $1$. Since $N$ is simply connected, it is either the real hyperbolic space $\H^n$, the complex hyperbolic space $\C \H^{n/2}$, the quaternionic hyperbolic space $\H \H^{n/4}$ or the octonionic hyperbolic plane $\C a \H^2$ up to rescaling. We know that the horospheres centered at $x \in \partial N$ are isometric to flat manifolds. The horospheres of $\C \H^{n/2}$ and $\H \H^{n/4}$ are isometric to the Heisenberg group of dimension $n-1$ endowed with a left-invariant metric, and the horospheres of $\C a \H^{2}$ are isometric to a $2$-steps nilpotent Lie group with a left-invariant metric. The only possibility is then $N = \H^n$.

Summarizing, we proved Theorem~\ref{Classification Negative}.

\begin{remark}
In the non-homogeneous case, since there is a hyberbolic isometry $p$ in $\bar P$, the matrices $S_t$ have a periodicity. Indeed, $p =: (p_1, p_2)$ (using the notations introduced above) acts as a linear map $A$ on the factor $\R^{n-1}$ and it is an isometry, so $\langle S_{t + c} A \cdot, S_{t + c} A \cdot \rangle = \langle S_t \cdot , S_t  \cdot \rangle$ where $c$ is the translation part of $p$ along its axis.
\end{remark}

\end{document}